\newtheorem{theorem}{Theorem}[section]
\newtheorem{lemma}[theorem]{Lemma}
\theoremstyle{definition}
\newtheorem{proposition}[theorem]{Proposition}
\theoremstyle{remark}
\theoremstyle{plain} \numberwithin{equation}{section}
\begin{document}


\title{Splints of root systems of Lie Superalgebras} 



\author{Rudra Narayan Padhan}

\email{rudra.padhan6@gmail.com}
\affiliation{Department of Mathematics, National Institute of
Technology Rourkela, Odisha- 769008, India}
\author{K.C. Pati}
\email[Corresponding Author email: ]{kcpati@nitrkl.ac.in}
\affiliation{Department of Mathematics, National Institute of
Technology Rourkela, Odisha- 769008, India
}%
\begin{abstract}
Splints of root systems of simple Lie algebras appears naturally on studies of embedding of reductive subalgebras. A splint can be used to construct branching rules as implementation of this idea simplifies calculation of branching coefficient. We extend the concept of splints to classical Lie superalgebras case as these algebras have wide applications in physics. In this paper we have determined the splints of root system of all classical Lie superalgebras and hope to contribute a small step in the direction of representation of these algebras.
\end{abstract}
\pacs{02.20.Sv, 02.20.Qs}
\maketitle 
\section{Introduction}

Now a days it is being felt the role of Lie algebras in explaining many physical phenomena is inevitable and desirable. Starting from the application of Lie algebras like $su(2),su(3),so(3)$ in particle physics; to the application of $g_{2}$, $e_{8}$ and their so called extension, affine and hyperbolic Kac-Moody algebras in conformal field theories, string theories, M-theories etc; the role of these types of algebras have increased by leaps and bounds. To be more specific, the structure of these algebras as well as their representation theories play an important role in various branches of physics. For example, in elementary particle physics and especially in model building it is quite important to have effective branching rules for Lie algebra representation. 
\par
A splint \cite{Richter2012} of a root system for a simple Lie algebra appears naturally on studies of (regular) embedding of reductive subalgebras. A splint can be used to construct branching rules. Now it is well understood that implementation of splints properties drastically simplifies calculation of branching coefficient.
\par
An embedding \cite{ Ly2015, L1996} $\iota$ of a root system $\Delta$  in to a root system $\Delta^{\prime}$ is a bijection map of roots $\Delta$ to a (proper) subset of $\Delta^{\prime}$ that commute with vector composition law in $\Delta$ and $\Delta^{\prime}$.
\[\iota:\Delta\longrightarrow \Delta^{\prime}\]
\[\iota(\alpha+\beta)=\iota(\alpha)+\iota(\beta), \forall \alpha,\beta \in \Delta.\]
Note that the image Im($\iota$) must not inherit the root system properties with the exception of addition rules equivalent to the addition rules in $\Delta_{1}$ (for preimages).Two embeddings $\iota_{1}$ and $\iota_{2}$ can be splinter of $\Delta$ when the later can be represented a disjoint union of images  $\iota_{1}$ and $\iota_{2}$. The term splint was introduced by D.Ritcher\cite{Richter2012} where a classification of splints for simple Lie algebras was obtained. At the same time there it was also mentioned that a splint must have tight connections with the injection fan construction. A fan \cite{Nazarov2012} $\Gamma\subset\Delta$ was introduced as a subset of a root system describing recurrence properties of branching coefficient for maximal embeddings. Injection fan is an efficient tool to study branching rules. It is now known that splint is a natural tool to study reduction properties of $g$-modules with respect to a subalgebra $a\hookrightarrow g$. There is a one to one correspondence between weight multiplicities in irreducible modules of splint and branching coefficient for a reduced module.
\par
In many mathematical physics application like supersymmetry we require some algebraic structure which can be readily transcribed for the propose of application to both bosonic and fermionic sectors in a systematic and consistent framework. So it is natural to visualize, evaluate and interpret possible consequences of supersymmetic extension of these type simple Lie algebras. The constructs in this case so generated are called simple Lie superalgebras. 
\par
Like Lie algebra, Lie superalgebras have wide applications in physics and so called classical Lie superalgebras have been classified which have properties similar to simple Lie algebra.  Having this in mind in this paper we construct the splints of Lie superalgebras. Hope this paper will be  a small step forward in this direction of calculation of branching coefficient on different branching rule similar to the role of splints in case of Lie algebras. We also hope the determination of splints for classical Lie superalgebras will pave the way for simplification of calculation of branching coefficients on different branching rule in representation theory. The theory of classical Lie superalgebras \cite{Frappat1989} runs quite parallel to that of Lie algebras, however the migration from Lie algebras to Lie superalgebras is not so direct as perceived. Determination of splints is mainly based on the root system of corresponding algebras.The theorem and techniques used to prove the main theorems in our paper runs similar to that of Ritcher \cite{Richter2012} on splints of Lie algebras. But there are some major differences which we like to mention here. At the outset we point out that in case of  Lie algebra all the root bases of a particular algebra are equivalent. This statement immediately implies that under a transformation of the Weyl group the root system will be transformed into an equivalent one with same Dynkin diagram. However in case of Lie superalgebra, a particular superalgebra may have many inequivalent root systems and hence different Dynkin diagrams due to presence of degenerate and non-degenerate odd roots along with bosonic roots. Out of all these bases the one which contains the least number of odd roots is called a distinguished basis. In this paper we restrict ourselves to the distinguished basis only.

\par
 In case of Lie algebras we have Weyl reflection with respect to one root (even) type only but in case of Lie superalgebras we have Weyl reflection with respect to both even and odd root. However  the Weyl reflections with respect to odd roots do not respect grading; as a result an even root may be mapped to a odd root and vice versa. So in consistent with the given definition of splint, embedding etc. for Lie superalgebras, we consider Weyl reflections only with respect to even roots. Similar attempts \cite{Ransingh2013} have been taken earlier by some authors resulting some partial results for splints but lacking in mathematical rigorousness and proofs. So in this paper we tried to classify the splints of Lie superalgebras once again which has been lying unsolved for many years.

\par
The aim in this article is therefore to provide all instances of this splintering of root systems for classical Lie superalgebras. This is achieved in a case-by-case analysis.
\par
The paper is organized as follows. After a brief introduction to the term splints and motivation for classifying splints of Lie superalgebras in section I we present the root systems of classical Lie superalgebras $A(m,n),B(m,n),B(0,n),C(n+1),D(m,n)$ in sections III, IV and V respectively. Before this in short we give some definitions in section II. We prove lemmas, propositions etc in each chapter corresponding to the individual type of Lie superalgebra which are helpful in determining the splints of the corresponding algebras. At the end of each section we provide a table which lists all the splints obtained through case by case approach. Similar studies are being done in section VI for exceptional Lie superalgebras $G(3),F(4)$ and $D(2,1;\alpha)$. Section VII contains few concluding remarks.

\section{Definitions}
Let $\Delta$ and $\Delta^{\prime}$ be positive root systems of two different Lie superalgebras with  $\Delta=\Delta_{0}+\Delta_{1}$ and $\Delta^{\prime}=\Delta^{\prime}_{0}+\Delta^{\prime}_{1}$  where $\Delta_{0},\Delta_{1}$ and $\Delta^{\prime}_{0},\Delta^{\prime}_{1}$ are even and odd roots of $\Delta$ and $\Delta^{\prime}$ respectively. Then the map  $\iota:\Delta\hookrightarrow \Delta^{\prime}$
is an embedding if 
\begin{enumerate}
\item $\iota$ is a injective function and  $\iota(\gamma)= \iota(\alpha)+\iota(\beta)$ for all $\alpha,\beta,\gamma \in \Delta$ such that $\gamma = \alpha+\beta $
\item $\iota(\Delta_{0})\subseteq \Delta^{\prime}_{0}$ and $\iota(\Delta_{1})\subseteq \Delta^{\prime}_{1}$ .
\end{enumerate}
A root system $\Delta$ splinters as $(\Delta_{1},\Delta_{2})$ if there are two embedding   $\iota_{1}:\Delta_{1}\hookrightarrow \Delta$  and $\iota_{2}:\Delta_{2}\hookrightarrow \Delta$ where
\begin{enumerate}
\item $\Delta$ is the disjoint union of the images of $\iota_{1}$ and $\iota_{2}$ and
\item neither the rank of $\Delta_{1}$ nor the rank of $\Delta_{2}$ exceeds the rank of $\Delta$.
\end{enumerate}

Suppose $\iota:\Delta\hookrightarrow \Delta^{\prime}$ is embedding and suppose that $(,)_{0}$ and $(,)_{1}$ are normalization of $\Delta$ and $\Delta^{\prime}$ respectively. Then the embedding $\iota$ is metric if there is a non-zero integer scalar $\lambda$ such that $(\alpha,\beta)_{0} = \lambda(\iota(\alpha),\iota(\beta))_{1}$ for $\alpha,\beta \in \Delta$ and non-metric otherwise.\\

Here we have found all the splints up to equivalence with Weyl group $W$ (Weyl reflections are with respect to even roots only). If $\Delta$ is a distinguished simple root system then the splints $(\Delta_{1},\Delta_{2})$ and $(\Delta_{1}^\prime,\Delta_{2}^\prime)$ of $\Delta$ are equivalent, if there exists $\sigma \in W$ such that $\sigma. (((\Delta_{1}  \cup (-\Delta_{1}))  |_{\Delta_{0}},(\Delta_{2}  \cup (-\Delta_{2}))  |_{\Delta_{0}}) = ((\Delta_{1}^\prime  \cup (-\Delta_{1}^\prime))  |_{\Delta_{0}},(\Delta_{2}^\prime  \cup (-\Delta_{2}^\prime))  |_{\Delta_{0}})$ and similar restriction for odd roots of $\Delta$ also.
Here we like to mention that Lie superalgebras have Weyl reflections with respect to both isotopic and non-isotopic odd roots. However, in that case we get non-equivalent classes, because grading will not be respected. \\
\section{Splints of Lie superalgebra $A(m-1,n-1)$}
The basic Lie superalgebra $A(m-1,n-1)$ has rank $m+n-1$ and the positive root system is given by 
\[\Delta=\{\varepsilon_{i}-\varepsilon_{j},\delta_{k}-\delta_{l},\delta_{k}-\varepsilon_{i}: 1\leq i\neq j \leq m , 1\leq k \neq l \leq n \},\]
with the normalization
\[(\varepsilon_{i},\varepsilon_{j})=\delta_{ij},~~(\delta_{k},\delta_{l})=\delta_{kl},~~(\varepsilon_{i},\delta_{k})=0 ~~~~~ for ~~1\leq i,j \leq m , 1\leq k,l \leq n .\]

 \begin{lemma}
 If $\Delta$ is a distinguished simple root system and $\Delta\hookrightarrow A(m-1,n-1)$, then $\Delta \cong A(r,s)$ for some $r\leq m-1,~s\leq n-1$.
 \end{lemma}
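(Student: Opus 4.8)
The plan is to reduce the (a priori purely combinatorial) notion of embedding to linear algebra, and then exploit the fact that \emph{every} root of $A(m-1,n-1)$ is a difference $x_a-x_b$ of two basis vectors (an $\varepsilon$ or a $\delta$), with the even roots being the same-type differences and the odd roots the cross-type ones. The enabling step is a \textbf{linearity lemma}: any embedding $\iota$ is the restriction to positive roots of the linear map $T$ sending each simple root $\alpha_i$ of $\Delta$ to $\iota(\alpha_i)$. Indeed, by the standard root-poset property each positive root $\gamma=\sum_i c_i\alpha_i$ can be built up as $\gamma=\gamma'+\alpha_i$ with $\gamma'$ again a positive root, so induction on height together with $\iota(\gamma)=\iota(\gamma')+\iota(\alpha_i)$ yields $\iota(\gamma)=\sum_i c_i\,\iota(\alpha_i)$. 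Thus an embedding is exactly a choice of images $v_i=\iota(\alpha_i)$ so that $\alpha_i\mapsto v_i$ carries every positive root of $\Delta$ to a positive root of $A(m-1,n-1)$, injectively and preserving grading.

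Next I would assemble an \emph{obstruction toolkit}, recording two features of the target that $\Delta$ must inherit through $\iota$: (O1) no root $\gamma$ of $A(m-1,n-1)$ has $2\gamma$ a root, since $2(x_a-x_b)$ is not a difference; and (O2) the even subsystem is $A_{m-1}\oplus A_{n-1}$, which is simply laced, so all of its root strings have length $\le 2$. Since $\iota$ preserves the grading and every additive relation, $\Delta$ can contain neither a root $\alpha$ with $2\alpha\in\Delta$ (this kills every $B(m,n)$, via the odd root $\delta_k$ and even root $2\delta_k$) nor an even root string $c-a,\,c,\,c+a$ of length $3$, because its image would be a length-$3$ string in the simply-laced $A_{m-1}\oplus A_{n-1}$. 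The latter excludes any $B_m$ or $C_n$ with $n\ge 2$, hence $C(n+1)$ for $n\ge2$ and the symplectic factor of $D(m,n)$ for $n\ge2$. After this step the even part of $\Delta$ is forced to be a product of type $A$ and type $D$ factors, with every symplectic factor of rank $\le1$.

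The main obstacle is excluding a genuine $D_m$ factor with $m\ge4$, since the length-$3$-string test does not detect type $D$. Here I would invoke the linearity lemma: setting $w_i=T(\varepsilon_i)$, the requirement that all $w_i\pm w_j$ be roots forces, on computing the norms of $w_i+w_j$ and $w_i-w_j$, that the $w_i$ be mutually orthonormal; then $2w_i=(w_i+w_j)+(w_i-w_j)$ admits only two decompositions into orthogonal roots of $A_{m-1}$, so at most three such vectors can coexist, contradicting $m\ge4$. The low-rank coincidences $D_2\cong A_1\times A_1$, $D_3\cong A_3$ and $C(2)\cong A(1,0)$ are already of type $A$ and cause no trouble; the one remaining small case $D(2,1)$, whose even part $A_1^{\oplus 3}$ slips past (O1), (O2) and the orthonormality count, I would dispatch by direct inspection of its cross (odd) roots, which cannot be matched to the cross roots $\pm(E_i-D_k)$ of the target. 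This leaves $\Delta\cong A(r,s)$.

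Finally, for the rank bound, grading sends the two even $A$-chains of $A(r,s)$ to same-type differences while the unique odd simple root maps to a cross difference $E_i-D_k$; combining the odd root with the adjacent even simple roots via the preserved additive relations forces one chain into the $\varepsilon$-block (on $m$ vectors) and the other into the $\delta$-block (on $n$ vectors). Since the image of the $A_r$-chain is a connected chain of roots $E_{a_0}-E_{a_1},E_{a_1}-E_{a_2},\dots$ in which all partial sums $E_{a_0}-E_{a_j}$ are (nonzero) roots, the indices $a_0,\dots,a_r$ are distinct, whence $r+1\le m$; likewise $s+1\le n$. This gives $r\le m-1$ and $s\le n-1$, completing the proof.
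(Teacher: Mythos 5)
Your route is genuinely different from the paper's. The paper disposes of the lemma in one line: every root of $A(m-1,n-1)$ has coefficients $0$ or $1$ over the distinguished simple roots, so (by additivity) the highest root of any embedded distinguished system has all coefficients equal to $1$, and among distinguished root systems of basic Lie superalgebras this property characterizes type $A(r,s)$. You instead build a linearity lemma plus a battery of obstructions. Most of your steps check out: the height-induction linearity lemma, (O1), (O2) (with care that the length-$3$ strings you use in $C_n$, $B_m$, $G_2$, $B_3$ can be realized inside the positive system, which they can), the orthonormality count killing $D_m$ for $m\geq 4$, and the block argument for the rank bound (modulo the harmless relabeling coming from $A(r,s)\cong A(s,r)$, since a priori the $A_r$-chain may land in the $\delta$-block and the $A_s$-chain in the $\varepsilon$-block).

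However, your case analysis has a concrete hole: $D(3,1)$. Its even part is $D_3\oplus C_1\cong A_3\oplus A_1$, which passes (O1) (no root of $D(3,1)$ has a root as its double, since $\delta_1$ itself is not a root), passes (O2) ($D(3,1)$ has no length-$3$ root strings at all), and passes your orthonormality count, which only bites for $D_m$ with $m\geq 4$: for $m=3$ the two available decompositions of $2w_i$ exactly accommodate the two indices $j\neq i$. So your assertion that $D(2,1)$ is ``the one remaining small case'' is false; the coincidence $D_3\cong A_3$ creates precisely the same loophole you noticed for $D_2\cong A_1\times A_1$, and nothing in your write-up closes it. The fix is cheap, and it is the very inspection you propose for $D(2,1)$, made precise: in any $D(m,1)$ the \emph{even} root $2\delta_1$ equals the sum of the two positive \emph{odd} roots $(\delta_1-\varepsilon_1)+(\delta_1+\varepsilon_1)$, whereas in the positive system of $A(m-1,n-1)$ the positive odd roots all have the form $\delta_a-\varepsilon_b$, and a sum of two such vectors is never a root; additivity plus grading preservation then give a contradiction. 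Note that this single observation also kills every $D(m,n)$, $B(m,n)$, $B(0,n)$ and $D(2,1;\alpha)$ at once, so incorporating it would not only repair the gap but let you discard the orthonormality count entirely and substantially shorten the proof --- though even then your argument remains much longer than the paper's coefficient argument.
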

 \begin{proof}
 As the highest root of $A(m-1,n-1)$ is a linear combination of distinguished simple roots,then every coefficient is equal to 1 .
 \end{proof}
 \begin{lemma}
 $A(m-1,0)$ and $A(0,n-1)$ are metrically embedded in $A(m-1,n-1)$. 
 \end{lemma}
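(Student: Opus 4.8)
The plan is to exhibit the two embeddings explicitly as coordinate inclusions and then verify the two defining conditions together with metricity. First I would pin down the root systems of the domains. Writing the root system of $A(m-1,0)$ on orthonormal labels $\varepsilon_{1},\dots,\varepsilon_{m}$ together with a single label $\delta$, its even roots are $\{\varepsilon_{i}-\varepsilon_{j} : 1\le i\neq j\le m\}$ and its odd roots are $\{\delta-\varepsilon_{i} : 1\le i\le m\}$; there are no $\delta_{k}-\delta_{l}$ roots since there is only one $\delta$. Symmetrically, $A(0,n-1)$ has even roots $\{\delta_{k}-\delta_{l} : 1\le k\neq l\le n\}$ and odd roots $\{\delta_{k}-\varepsilon : 1\le k\le n\}$ on labels $\delta_{1},\dots,\delta_{n}$ and a single $\varepsilon$.

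Next I would define $\iota_{1}:A(m-1,0)\hookrightarrow A(m-1,n-1)$ by fixing one $\delta$-label of the target, say $\delta_{1}$, and setting $\varepsilon_{i}\mapsto\varepsilon_{i}$ for $1\le i\le m$ and $\delta\mapsto\delta_{1}$, extended linearly. Then $\varepsilon_{i}-\varepsilon_{j}\mapsto\varepsilon_{i}-\varepsilon_{j}$ and $\delta-\varepsilon_{i}\mapsto\delta_{1}-\varepsilon_{i}$, so every root of $A(m-1,0)$ is carried to a root of $A(m-1,n-1)$. Since $\iota_{1}$ is the identity on the coordinate labels it is injective and automatically additive, i.e. $\iota_{1}(\gamma)=\iota_{1}(\alpha)+\iota_{1}(\beta)$ whenever $\gamma=\alpha+\beta$ is again a root; and by construction even roots go to even roots and odd roots to odd roots, so both conditions of the definition of an embedding hold. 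The map $\iota_{2}$ for $A(0,n-1)$ is defined symmetrically by fixing an $\varepsilon$-label, say $\varepsilon_{1}$, and sending $\delta_{k}\mapsto\delta_{k}$, $\varepsilon\mapsto\varepsilon_{1}$; the identical verification applies.

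Finally I would establish metricity. Because both inclusions act as the identity on the relevant orthonormal labels, and the normalizations of source and target are given by the very same formulas $(\varepsilon_{i},\varepsilon_{j})=\delta_{ij}$, $(\delta_{k},\delta_{l})=\delta_{kl}$, $(\varepsilon_{i},\delta_{k})=0$, the bilinear form is preserved exactly, so $(\alpha,\beta)_{0}=(\iota(\alpha),\iota(\beta))_{1}$ for all $\alpha,\beta$. Hence both embeddings are metric with scalar $\lambda=1$.

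There is essentially no hard step here: the content is the observation that the root system of $A(m-1,n-1)$ contains isometric coordinate copies of those of $A(m-1,0)$ and $A(0,n-1)$. The only point requiring care is bookkeeping of the grading, namely checking that the single odd generator of each smaller superalgebra lands among the odd roots $\delta_{k}-\varepsilon_{i}$ of the target rather than among the even roots $\varepsilon_{i}-\varepsilon_{j}$ or $\delta_{k}-\delta_{l}$; this follows at once from the chosen label identifications.
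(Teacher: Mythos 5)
Your proposal is correct: the explicit coordinate inclusions $\varepsilon_{i}\mapsto\varepsilon_{i}$, $\delta\mapsto\delta_{1}$ (and symmetrically $\delta_{k}\mapsto\delta_{k}$, $\varepsilon\mapsto\varepsilon_{1}$) preserve roots, grading, additivity, and the bilinear form with $\lambda=1$, which is exactly what metric embedding requires. The paper in fact states this lemma with no proof at all, treating it as immediate, and your verification is precisely the routine check it implicitly relies on.
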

 \begin{lemma}
 If $A(r_{1},s_{1})\hookrightarrow A(m-1,n-1)$ and $A(r_{2},s_{2})\hookrightarrow A(m-1,n-1)$ are embeddings with disjoint images, then $r_{1}+r_{2}\leq m ,~ s_{1}+s_{2}\leq n$ .
 \end{lemma}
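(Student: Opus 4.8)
The plan is to reduce the statement to a counting argument about type-$A$ subsystems of the even root system $A_{m-1}\oplus A_{n-1}$ of the target, so that the odd roots play no essential role in this particular lemma. First I would record the behaviour of a single embedding $\iota\colon A(r,s)\hookrightarrow A(m-1,n-1)$ on even roots. By condition (2) of the definition of embedding, $\iota$ carries the even subsystem $A_r\oplus A_s$ of the source into the even subsystem of the target, which splits as the $\varepsilon$-roots $\{\varepsilon_i-\varepsilon_j\}$ (a copy of $A_{m-1}$) and the $\delta$-roots $\{\delta_k-\delta_l\}$ (a copy of $A_{n-1}$). Since a sum $(\varepsilon_i-\varepsilon_j)+(\delta_k-\delta_l)$ is never a single root, and since the Dynkin diagram of each irreducible factor $A_r$, $A_s$ is connected, the image of each factor must lie entirely in one block: $\iota(A_r)$ is a subsystem of either the $\varepsilon$-block or the $\delta$-block, and likewise for $\iota(A_s)$. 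Working in the distinguished realization in which $A_r$ is the $\varepsilon$-component and $A_s$ the $\delta$-component, this gives $\iota(A_r)\subseteq A_{m-1}$ and $\iota(A_s)\subseteq A_{n-1}$.

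Next I would pin down the shape of $\iota(A_r)$ inside $A_{m-1}$. Writing the simple roots of $A_r$ as $\alpha_1,\dots,\alpha_r$ with each $\alpha_p+\alpha_{p+1}$ a root, the images $\iota(\alpha_p)=\varepsilon_{g_p}-\varepsilon_{h_p}$ must satisfy $h_p=g_{p+1}$ so that $\iota(\alpha_p)+\iota(\alpha_{p+1})$ is again a root; telescoping then shows that $\iota$ uses a set $A_\iota\subseteq\{1,\dots,m\}$ of exactly $r+1$ distinct $\varepsilon$-indices (distinctness follows because every subsum $\alpha_p+\dots+\alpha_q$ maps to the nonzero root $\varepsilon_{g_p}-\varepsilon_{h_q}$), and that $\iota(A_r)$ is the full $A_r$ subsystem $\{\varepsilon_i-\varepsilon_j:i,j\in A_\iota\}$ on those indices. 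Symmetrically $\iota(A_s)$ is the full subsystem on a set $B_\iota\subseteq\{1,\dots,n\}$ of $s+1$ distinct $\delta$-indices.

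Now I would apply this to the two given embeddings, obtaining coordinate sets $(A_1,B_1)$ and $(A_2,B_2)$ with $|A_t|=r_t+1$ and $|B_t|=s_t+1$. The essential step is to translate disjointness of images into an overlap bound on these index sets: if $|A_1\cap A_2|\ge 2$, choosing two common indices $p<q$ produces the root $\varepsilon_p-\varepsilon_q$ lying in both $\iota_1(A_{r_1})$ and $\iota_2(A_{r_2})$, contradicting disjointness of the images. Hence $|A_1\cap A_2|\le 1$, and since $A_1,A_2\subseteq\{1,\dots,m\}$,
\[
(r_1+1)+(r_2+1)=|A_1|+|A_2|=|A_1\cup A_2|+|A_1\cap A_2|\le m+1,
\]
so that $r_1+r_2\le m-1\le m$. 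The identical argument on the $\delta$-blocks yields $|B_1\cap B_2|\le 1$ and hence $s_1+s_2\le n$.

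The main obstacle I expect lies not in the counting but in the bookkeeping around grading and degeneracies. On the grading side one must justify that each even component lands in a single block and, more delicately, that $A_r$ is realized in the $\varepsilon$-block rather than the $\delta$-block: because the definition of embedding only requires parity to be preserved, an a priori $\varepsilon$-type even component could in principle be realized among the $\delta$-roots (this is precisely the identification $A(r,s)\cong A(s,r)$), so one must fix the convention that removes this ambiguity, as otherwise one obtains only the mixed bounds $r_1+s_2\le m$ and $s_1+r_2\le n$. The degenerate cases also need separate handling: when $r_t=0$ or $s_t=0$ the corresponding factor is empty and contributes no root to compare, but then it contributes $0$ to the relevant sum while the surviving index set still lies inside $\{1,\dots,m\}$ (resp. $\{1,\dots,n\}$), so the inequality holds trivially. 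Once these points are dispatched, the two inequalities follow as above.
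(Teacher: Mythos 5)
Your proof is correct and follows essentially the same route as the paper: the paper's entire proof is the one-line reduction to the classical type-$A$ fact that embeddings $A_l\hookrightarrow A_n$ and $A_k\hookrightarrow A_n$ with disjoint images force $k+l\leq n$, which is exactly the statement your index-counting argument establishes for the even blocks $A_{m-1}$ and $A_{n-1}$. The extra care you take (proving the counting fact rather than citing it, handling the $A(r,s)\cong A(s,r)$ orientation ambiguity, and the degenerate factors $r_t=0$ or $s_t=0$) fills in details the paper passes over silently, but the underlying strategy is the same.
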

 \begin{proof}
 As $A_{l}\hookrightarrow A_{n}$ and $ A_{k}\hookrightarrow  A_{n}$ are embeddings with disjoint images, then $k+l\leq n$.
 \end{proof}
 \begin{lemma}
 Suppose $m\geq3$,~$n\geq3$ and either $r\geq3,~s\geq2$ or $r\geq2,~s\geq3$. If $A(m-1,n-1)$ has a splint where $A(r-1,s-1)$ is a component, then $A(m-2,n-2)$ has a splint having $A(r-2,s-2)$ as a component.
 \end{lemma}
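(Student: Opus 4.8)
The plan is to obtain the reduced splint by a coordinate-deletion argument, in the spirit of the node-deletion used for the classical series. Suppose $A(m-1,n-1)$ splinters with $A(r-1,s-1)$ a component, realised by embeddings $\iota_1:A(r-1,s-1)\hookrightarrow\Delta$ and $\iota_2:\Delta_2\hookrightarrow\Delta$ whose images are disjoint and cover $\Delta=A(m-1,n-1)$. Because $\iota_1$ preserves addition and sends even roots to even roots, the image of the even part $A_{r-1}\times A_{s-1}$ of the component is carried by sub-chains of coordinates: the $A_{r-1}$ factor occupies $r$ distinct $\varepsilon$-labels $P=\{p_1,\dots,p_r\}$ ordered along the chain, the $A_{s-1}$ factor occupies $s$ distinct $\delta$-labels $Q=\{q_1,\dots,q_s\}$, and the odd roots of the component map to $\delta_{q}-\varepsilon_{p}$ with $p\in P$, $q\in Q$. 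First I would record this coordinate description; it rests on the same fact about additive $A$-type embeddings already invoked for Lemma 3 (namely that $A_k\hookrightarrow A_N$ is a sub-chain).

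Next I would delete one endpoint $\varepsilon$-label of $P$, say $\varepsilon_{p_1}$, together with one endpoint $\delta$-label of $Q$, say $\delta_{q_s}$, from the ambient system. Erasing these two labels turns $A_{m-1}\times A_{n-1}$ into $A_{m-2}\times A_{n-2}$ and deletes every root containing $\varepsilon_{p_1}$ or $\delta_{q_s}$, so the surviving roots are exactly those of $A(m-2,n-2)$. Among the component-$1$ roots the survivors are $\varepsilon_{p_i}-\varepsilon_{p_j}$ ($2\le i<j$), $\delta_{q_k}-\delta_{q_l}$ ($k<l\le s-1$), and $\delta_{q_k}-\varepsilon_{p_i}$ ($k\le s-1$, $i\ge 2$), which is a standard copy of $A(r-2,s-2)$. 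The hypotheses $m,n\ge 3$ and $(r\ge 3,s\ge 2)$ or $(r\ge 2,s\ge 3)$ are precisely what keep $A(m-2,n-2)$ and $A(r-2,s-2)$ genuine superalgebra root systems, in particular ruling out the degenerate case $A(0,0)$.

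For the second component I would set $\Delta_2'$ to be the preimage under $\iota_2$ of those roots of its image that contain neither $\varepsilon_{p_1}$ nor $\delta_{q_s}$, and put $\iota_2'=\iota_2|_{\Delta_2'}$. Injectivity is inherited, the additive condition $\iota_2'(\gamma)=\iota_2'(\alpha)+\iota_2'(\beta)$ persists on the smaller domain since it held for $\iota_2$, and by construction $\iota_2'$ lands in $A(m-2,n-2)$. Because $\iota_1,\iota_2$ had disjoint images covering $\Delta$, the restricted maps have disjoint images covering exactly $A(m-2,n-2)$; together with the reduced copy of $A(r-2,s-2)$ this is the desired splint. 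The rank condition is immediate, as $\Delta_2'$ lies in the span of strictly fewer coordinates.

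The step needing real care---and the main obstacle---is confirming that $\Delta_2'$ is a bona fide root system, so that deleting the two labels does not leave $\iota_2$ with an image that fails to be an embedded root system. By Lemma 1 the second component is again of type $A(\cdot,\cdot)$, carried by sub-chains of coordinates; removing a label from such a system deletes a node and returns a type-$A$ system, possibly a reducible one if the removed label sat in the interior of a chain, but still an admissible graded root system. Verifying that $\iota_2'$ respects the grading (even to even, odd to odd), which is inherited from $\iota_2$, then finishes the argument, the rest being routine bookkeeping.
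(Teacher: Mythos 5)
Your proposal follows essentially the same route as the paper's own proof: place the $A(r-1,s-1)$ component in standard position (WLOG on the first $r$ $\varepsilon$-labels and first $s$ $\delta$-labels) and restrict the whole splint to the subsystem $A(m-2,n-2)\hookrightarrow A(m-1,n-1)$ obtained by deleting one $\varepsilon$-label and one $\delta$-label lying in the component's support. The paper states this restriction argument more tersely (asserting that all components are embedded metrically and hence restrict to a splint), so your extra bookkeeping on $\Delta_2'$ is a fleshing-out of the same idea rather than a different method; the only slight slip --- label deletion on a type-$A$ piece yields an irreducible type-$A$ system of lower rank, never a reducible one --- is harmless to the argument.
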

 \begin{proof}
 Suppose $(\Delta_{1},\Delta_{2})$ is a splint of $A(m-1,n-1)$ with  $\iota:A(r-1,s-1)\hookrightarrow \Delta_{1}$ as a component. Without loss of generality, one may assume that the roots in the image of $i$ have the form $\{\varepsilon_{i}\pm\varepsilon_{j},\delta_{k}\pm\delta_{l},\delta_{k}\pm\varepsilon_{i}\}$ where $1\leqslant i\neq j\leqslant r , 1\leqslant k \neq l\leqslant s$. If we are restricting the splint to the embedding $\iota_{1}:A(m-2,n-2)\hookrightarrow A(m-1,n-1)$ and all the components are embedded metrically, this yields a splint of $A(m-2,n-2)$ having $A(r-2,s-2)$ as a component.
 \end{proof}
 \begin{proposition}
Assume $m,n \geq 6$ and if $(\Delta_{1},\Delta_{2})$ is a splint of $A(m-1,n-1)$ having $A(r,s)$ as a component, then $r \in \{0,1,m-1,m-2\}$ and $s \in \{0,1,n-1,n-2\}$
 \end{proposition}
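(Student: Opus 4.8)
The plan is to first pin down the shape of a component, then run a downward induction driven by the recursion lemma (the fourth lemma above), and finally dispose of a small number of boundary configurations by hand. By the classification lemma (the first lemma, that every distinguished sub-root-system is of type $A(r,s)$), any component of a splint of $A(m-1,n-1)$ already has the form $A(r,s)$ with $r\le m-1$, $s\le n-1$, so the only task is to locate $r$ and $s$. Since $A(m-1,n-1)\cong A(n-1,m-1)$ under the interchange of the roles of the $\varepsilon_i$ and the $\delta_k$ (hence $m\leftrightarrow n$ and $r\leftrightarrow s$), it suffices to prove $r\in\{0,1,m-2,m-1\}$; the constraint on $s$ then follows by symmetry. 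Equivalently, I must exclude the ``interior'' range $2\le r\le m-3$.

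The heart of the argument is a downward induction along the diagonal $(m,n)\mapsto(m-1,n-1)$, with the recursion lemma as the inductive engine. Assume the analogous statement for $A(m-2,n-2)$, namely that every component $A(p,q)$ of a splint there satisfies $p\in\{0,1,m-3,m-2\}$. Suppose, for contradiction, that $A(r,s)$ is a component of a splint of $A(m-1,n-1)$ with $2\le r\le m-3$ and $s\ge 1$. Then the hypotheses of the recursion lemma are satisfied, so it yields a splint of $A(m-2,n-2)$ having $A(r-1,s-1)$ as a component. The induction hypothesis forces $r-1\in\{0,1,m-3,m-2\}$, i.e. $r\in\{1,2,m-2,m-1\}$. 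Intersecting this with $2\le r\le m-3$ (an interval that is nonempty precisely because $m\ge 6$) leaves only $r=2$. Thus a single application of the recursion lemma wipes out the entire interior $3\le r\le m-3$, and the whole problem collapses onto three residual situations: the boundary value $r=2$, its mirror $s=2$, and the configurations in which the recursion lemma cannot be applied.

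The residual cases are exactly those where the recursion lemma gives no traction: $r=2$ (with $s\ge 1$), the configuration $s=0$ with $2\le r\le m-3$, and the base of the induction at small $(m,n)$. These I would settle directly using the disjointness bound (the third lemma). Restricting the splint to the even subsystem $\Delta_0\cong A_{m-1}\oplus A_{n-1}$ produces a disjoint tiling of $A_{m-1}\oplus A_{n-1}$ by the even parts $A_r\oplus A_s$ of the components, subject to the additive rank inequalities $r_1+r_2\le m$ and $s_1+s_2\le n$ for any two components with disjoint images. A component carrying an $A_2$-factor (the case $r=2$) forces the remaining components to tile the complementary roots of $A_{m-1}$ together with the odd roots $\delta_k-\varepsilon_i$, and a finite analysis of how an $A_2$ subsystem can be embedded in $A_{m-1}$ shows this is incompatible with the rank bound unless the complementary even part is trivial, i.e. unless $r\in\{0,1,m-2,m-1\}$ after all. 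The admissible large values $r=m-1,m-2$ are realized by the metric embeddings of the second lemma ($A(m-1,0)$ and $A(0,n-1)$), which also serve to anchor the base cases of the induction.

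I expect this final boundary analysis to be the main obstacle. The recursion lemma handles the generic interior cleanly, but it is structurally blind to $r=2$ and to any component with $s=0$, and it cannot be iterated once $m,n$ become small; there the claim must be verified by an explicit study of how $A_r\oplus A_s$ can partition $A_{m-1}\oplus A_{n-1}$ alongside the odd roots. This is the exact analogue of Richter's base-case computations for simple Lie algebras, and it is where the combinatorics of the root-space tiling, rather than the formal recursion, does the real work. The hypothesis $m,n\ge 6$ is precisely what keeps the edge sets $\{0,1\}$ and $\{m-2,m-1\}$ disjoint and leaves the interior $2\le r\le m-3$ wide enough for the inductive reduction to be meaningful.
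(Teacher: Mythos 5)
Your architecture is the one the paper intends---its entire proof is ``argue by preceding results and table''---and your use of the recursion lemma is sound as far as it goes: for a component $A(r,s)$ with $r\ge 2$, $s\ge 1$ the lemma's hypotheses hold, and the diagonal descent correctly collapses the interior $3\le r\le m-3$ onto $r=2$. But the proposal fails at exactly the two places you defer. First, the residual case $r=2$: you claim the disjointness bound (the third lemma) plus ``a finite analysis of how an $A_2$ subsystem can be embedded in $A_{m-1}$'' yields a contradiction. The rank bound cannot do this: $A_2$ together with, say, an $A_{m-3}$ factor satisfies $r_1+r_2\le m$, so nothing is ``incompatible with the rank bound.'' What genuinely excludes $r=2$ for $m\ge 6$ is Richter's classification \cite{Richter2012} applied to the restriction of the splint to the even subsystem: the exceptional splint $(A_2+A_2,\,2D_2)$ of $A_4$ has no analogue in rank $\ge 5$, so an even restriction exhibiting $A_2$ as a component of a splint of $A_{m-1}$ is impossible precisely when $m-1\ge 5$. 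The paper's table shows this is not a technicality: for $m=5$ the splint $(A(2,n)+A_2,\,2D_2+2nA(0,0))$ of $A(4,n)$ exists, so any correct argument must distinguish $m\ge 6$ from $m=5$, and the rank bound is blind to that distinction.

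Second, your induction never grounds out. Every pair with $\min(m,n)=6$ is a base case, and there the inductive hypothesis concerns $A(4,n-2)$ (or $A(m-2,4)$), where the proposition's conclusion is actually false---again by the paper's table, $A(4,n-2)$ has a splint with component $A(2,n-2)$, and $2\notin\{0,1,3,4\}$. Consequently the descent from $m=6$ only yields $r-1\in\{0,1,2,3,4\}$, leaving both $r=2$ and $r=3$ unresolved, and this failure propagates to every larger $m$. Your remark that the metric embeddings of the second lemma ``anchor the base cases'' addresses only realizability of $r=m-1,m-2$, not exclusion of $r=2,3$; and the case $s=0$ with $2\le r\le m-3$ is listed as residual but never treated at all. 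Closing these holes requires exactly the input the paper gestures at---the completeness of its table for small parameters together with the rank-$\ge 5$ classification of splints of $A_{m-1}$ on even restrictions---so as written the proposal is a correct skeleton whose decisive steps are missing.
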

 \begin{proof}
 We can argue by preceding results and table of $A(m-1,n-1)$.
 \end{proof}
 \begin{lemma}
Suppose $\Delta$ is a positive root system of a Lie superalgebra and $\Delta^{\prime}$ is a root system of $A(m-1,n-1)$ or $D(m,n)$. If $\iota:\Delta\hookrightarrow \Delta^{\prime}$ is an embedding, then $\Delta$ is either $A(m-1,n-1)$ or $D(m,n)$ and $\iota$ is metric.
\end{lemma}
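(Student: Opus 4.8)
The plan is to adapt Richter's simply-laced argument, the key first move being to linearize the embedding. Since we work with a distinguished basis, every positive root of $\Delta$ is built from the simple roots by adding one simple root at a time with each partial sum again a root; applying condition (1) of the definition of embedding repeatedly then shows that $\iota$ is the restriction to $\Delta$ of the linear map $L$ determined on a simple basis by $\alpha_i\mapsto\iota(\alpha_i)$. Thus $\iota(\sum c_i\alpha_i)=\sum c_i\iota(\alpha_i)$, and the problem becomes that of a grading-preserving linear map carrying the super root system $\Delta$ into $\Delta'$. I would also record the root-length data of the two targets: $A(m-1,n-1)$ is equilateral (all squared lengths $2$), while $D(m,n)$ has squared lengths $2$ and $4$ with the long roots $\pm 2\delta_k$, so that in either target the ratio of squared root lengths is at most $2$; and I note $C(n+1)=D(1,n)$, so type $C$ needs no separate treatment.

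Next I would identify $\Delta$. For the target $A(m-1,n-1)$ this is already Lemma 3.1, which gives $\Delta\cong A(r,s)$. For the target $D(m,n)$ I would prove the analogue of Lemma 3.1 by the same distinguished-highest-root coefficient argument, so that the only source types compatible with $L$ are $A$ and $D$. The algebras to exclude are $B(m,n)$ and $B(0,n)$, each of which carries a short root (the even root $\varepsilon_i$, resp. the odd root $\delta_k$) of squared length $1$ together with a long root of squared length $4$. Their characteristic rank-two configuration is a root string $\beta,\beta+\alpha,\beta+2\alpha$ of length three; carrying it through $L$ would produce three collinear roots $L\beta,L\beta+L\alpha,L\beta+2L\alpha$ of the target, which is impossible in the equilateral $A$, and which is excluded in $D$ by inspecting its collinear root triples against the grading. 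Hence $\Delta$ is of type $A$ or type $D$.

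It remains to show $\iota$ is metric, i.e. that $L$ scales the invariant form: $(\alpha,\beta)_0=\lambda(L\alpha,L\beta)_1$ for a fixed nonzero $\lambda$. Fixing $\lambda$ by an even root, I would verify this root by root, using that in both $\Delta$ and $\Delta'$ the inner product of two roots is pinned down by their squared lengths together with the information of which of $\alpha\pm\beta$ are roots, data that $L$ transports. The hard part is exactly here. Condition (1) only gives the forward implication $\alpha+\beta\in\Delta\Rightarrow L\alpha+L\beta\in\Delta'$; because $L(\Delta)$ is a proper subset of $\Delta'$, I must rule out a ``spurious'' sum, where $L\alpha+L\beta$ is a root of $\Delta'$ although $\alpha+\beta$ is not a root of $\Delta$, since such a sum would corrupt the inner-product bookkeeping. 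Controlling this requires the grading hypotheses $\iota(\Delta_0)\subseteq\Delta_0'$ and $\iota(\Delta_1)\subseteq\Delta_1'$ to locate the images of the odd (isotropic) roots, and it needs extra care in the $D$-target case, where the long roots $\pm 2\delta_k$ may interact with images of short roots. I expect to settle these by a finite case analysis on the root types of $\Delta$, separating even and odd images by the grading and using the bounded length ratio to forbid the dangerous configurations, which then yields metricity.
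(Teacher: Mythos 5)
Your linearization step and your exclusion of the non--simply-laced types are sound, and in fact more careful than the paper itself, which at this point simply asserts that $A(m-1,n-1)$ and $D(m,n)$ are ``simply laced'' and immediately concludes the type of $\Delta$; your collinear-triple (root-string) argument is a legitimate way to make that assertion rigorous, although you should note that it must also be applied to $G(3)$ and $F(4)$ (whose even parts $G_2$ and $B_3$ contain strings $\beta,\beta+\alpha,\beta+2\alpha$), not only to $B(m,n)$ and $B(0,n)$; an even quicker exclusion, used elsewhere in the paper, is that $B(m,n)$ and $B(0,n)$ contain an odd root $\beta$ with $2\beta$ a root, while neither target does.

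The genuine gap is in the metricity step, precisely at the point you yourself flag as ``the hard part.'' For a pair $\alpha,\beta\in\Delta$ with $(\alpha,\beta)_0=0$ and no addition relation, no analysis of the local data of that pair (grading of the images, squared lengths, bounded length ratio) can rule out the spurious sum: nothing in the two embedding axioms prevents $L\alpha+L\beta$ from being a root of $\Delta^{\prime}$. Concretely, map the root system $A_1+A(0,0)$ into $A(1,0)$ by sending the even root to $\varepsilon_1-\varepsilon_2$ and the odd root to $\delta_1-\varepsilon_1$: both axioms hold (there are no sums to preserve, grading is respected), the length data is as constrained as it can be, yet the embedding is non-metric, since orthogonal roots have non-orthogonal images. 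This shows that any correct proof must use the irreducibility (connectedness) of $\Delta$ --- for orthogonal $\alpha,\beta$ one needs a third root $\gamma$ with $\alpha+\gamma$, $\gamma+\beta$, $\alpha+\gamma+\beta$ all roots, whose images then force $(L\alpha,L\beta)_1=0$ --- and your plan never invokes connectedness anywhere. This is also exactly where the paper's proof takes a different and simpler route: rather than root-by-root inner-product bookkeeping, it observes that for rank greater than $2$ every odd root of $\Delta$ lies in a rank-two subsystem of type $A(1,0)$ or $A(0,1)$ (odd root plus even root whose sum is a root), that every embedding of $A(1,0)$ or $A(0,1)$ into $\Delta^{\prime}$ is metric, and then lets connectedness of $\Delta$ propagate a single scalar $\lambda$. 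Your proposal can be repaired by grafting in that connectivity argument, but as written the deferred ``finite case analysis \dots\ using the grading and the bounded length ratio'' is a step that would fail.
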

\begin{proof}
 $A(m-1,n-1),D(m,n)$ are simply laced, hence $\Delta$ has type  $A(m-1,n-1)$ or $D(m,n)$. Suppose rank of  $\Delta$ is 2, then  $\Delta$ is either $A(0,1),A(1,0)$ or $A(1,1)$. These are metrically embedded in $A(m-1,n-1)$. Suppose rank of  $\Delta$ is greater than 2, then given any odd root $ \alpha \in \Delta$ there is a even root $\beta \in  \Delta$ such that $\alpha+\beta \in  \Delta$. Hence, we always have an embedding $A(1,0)$ or $A(0,1) \hookrightarrow  \Delta$ . As every embedding $A(1,0)$ or $A(0,1) \hookrightarrow  \Delta^{\prime}$ is metric , so $\iota$ is metric.
\end{proof}

\begin{lemma}
 $F(4)$ and $G(3)$ are not embedded in $A(m-1,n-1)$, $B(m,n)$, $C(n+1)$ and $D(m,n)$
 and $D(2,1;\alpha)$ is not embedded in $A(m-1,n-1)$ and $C(n+1)$.
\end{lemma}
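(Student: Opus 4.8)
The plan is to handle the three exceptional source algebras by separating the targets into the simply-laced family $\{A(m-1,n-1),D(m,n)\}$ and the non-simply-laced family $\{B(m,n),C(n+1)\}$, and to exploit three features that any embedding $\iota$ preserves: since $\iota(\Delta_0)\subseteq\Delta'_0$, it restricts to an addition-preserving injection of the even root subsystems; it sends an odd root $\alpha$ with $2\alpha\in\Delta$ to an odd root $\iota(\alpha)$ with $2\iota(\alpha)=\iota(2\alpha)\in\Delta'$; and it carries arithmetic progressions of roots to arithmetic progressions of roots. I will use the explicit even parts: $F(4)$ has even part $B_3\oplus A_1$, $G(3)$ has $G_2\oplus A_1$, and $D(2,1;\alpha)$ has $A_1\oplus A_1\oplus A_1$, while the relevant targets have even parts $A_{m-1}\oplus A_{n-1}$, $D_m\oplus C_n$, $B_m\oplus C_n$ and $C_n$.

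First the simply-laced targets. By the preceding lemma, any embedding into $A(m-1,n-1)$ or $D(m,n)$ forces the source to be of type $A(r,s)$ or $D(r,s)$. The even parts $B_3\oplus A_1$ and $G_2\oplus A_1$ are not simply laced, so neither $F(4)$ nor $G(3)$ is of such a type, and $A_1\oplus A_1\oplus A_1$ is not of the form $A_r\oplus A_s$, so $D(2,1;\alpha)$ cannot embed in $A(m-1,n-1)$. This is consistent with the statement, which does not exclude $D(2,1;\alpha)$ from $D(m,n)$, since indeed $A_1^{\oplus 3}=D_2\oplus C_1$. Thus $F(4),G(3)\not\hookrightarrow A,D$ and $D(2,1;\alpha)\not\hookrightarrow A$ are all immediate.

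For $G(3)$ against the non-simply-laced targets I use root-string length: the $G_2$ subsystem contains four roots in arithmetic progression, $\alpha_2,\alpha_2+\alpha_1,\alpha_2+2\alpha_1,\alpha_2+3\alpha_1$ with $\alpha_1$ short, which map under $\iota$ to the even roots $\iota(\alpha_2)+k\,\iota(\alpha_1)$, $k=0,1,2,3$, lying on a single $\iota(\alpha_1)$-string of length $\geq 4$; such a string forces a $G_2$ factor, absent from $B_m\oplus C_n$ and $C_n$, so $G(3)\not\hookrightarrow B(m,n),C(n+1)$. For $F(4)$ and $D(2,1;\alpha)$ against $C(n+1)=osp(2|2n)$ I use an $so(2)$-charge: the coefficient of the $so(2)$-weight $\varepsilon$ is a function $p$ that vanishes on every even root, equals $\pm1$ on every odd root, and is additive along root sums. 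An embedding would pull $p$ back to such a $\{0,\pm1\}$-charge $q$ on the source; but in $F(4)$ (respectively $D(2,1;\alpha)$) the odd roots are linked through even-root differences into a single $q$-class, while an odd$+$odd$=$even relation forces two of them to carry opposite charges, giving $q=-q\neq 0$, a contradiction. (The same test disposes of $G(3)\hookrightarrow C(n+1)$ at once, since its non-isotropic odd root $\delta$, with $2\delta\in\Delta_0$, would need $q(2\delta)=\pm2\neq0$.)

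The remaining and hardest case is $F(4)\hookrightarrow B(m,n)$: here the source has no $G_2$ factor, so the string argument is unavailable, and $B(m,n)$ admits no global $so(2)$-charge because its odd roots $\pm\delta_k$ satisfy $2\delta_k\in\Delta'_0$, so the charge argument is vacuous as well. My plan is to reduce to the even subsystem together with the odd roots: an embedding restricts to an addition-preserving copy of $B_3\oplus A_1$ inside $B_m\oplus C_n$, and each isotropic odd root $\frac{1}{2}(\pm\varepsilon_1\pm\varepsilon_2\pm\varepsilon_3\pm\delta)$, supported on all three $\varepsilon_i$ and on $\delta$, must be matched by an odd root of $B(m,n)$ supported on a single $\varepsilon_a$ and a single $\delta_k$. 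I expect this to force the embedding to be metric, after which a length-ratio contradiction finishes it: in $F(4)$ the short, long and $A_1$ roots have squared lengths in ratio $1:2:3$, whereas those available in $B_m\oplus C_n$ lie in $\{1,2,4\}$, and no scalar $\lambda$ makes $\lambda\{1,2,3\}\subseteq\{1,2,4\}$. The main obstacle is precisely establishing metricity of every such embedding (so as to rule out non-metric injections too); I would settle it as in the preceding lemma, by exhibiting inside any rank-$\geq2$ image a rank-two sub-embedding $A(1,0)$ or $A(0,1)$ that is forced to be metric, and otherwise fall back on the explicit root tables of $B(m,n)$ compiled earlier. The same length bookkeeping reconfirms the $C(n+1)$ and $D(m,n)$ cases.
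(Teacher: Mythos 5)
Your three devices --- the reduction to the paper's lemma for the simply-laced targets, the four-term $G_2$ root string, and the $\varepsilon$-coefficient charge on $C(n+1)$ --- are sound and dispose of every case except the one you yourself flag as hardest, and that case is where the proposal stops being a proof. For $F(4)\hookrightarrow B(m,n)$ everything is conditional (``I expect this to force the embedding to be metric''), and the step you propose to lean on is actually false for this target: it is \emph{not} true that a rank-two sub-embedding $A(1,0)\hookrightarrow B(m,n)$ is forced to be metric. Because $B(m,n)$ has non-isotropic odd roots $\delta_k$, the map sending the even root of $A(1,0)$ to $\varepsilon_1$, one odd root to $\delta_1-\varepsilon_1$ and the remaining odd root to $\delta_1$ preserves the single addition relation and the grading, hence is a legitimate embedding, yet it sends an isotropic root to a root of nonzero norm, so no nonzero scalar $\lambda$ can make it metric. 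Consequently metricity cannot be propagated the way it is for the $A$/$D$ targets, and your length-ratio endgame ($\lambda\{1,2,3\}\subseteq\{1,2,4\}$) has nothing to stand on. The case can instead be closed purely additively: each of $\varepsilon_1,\ \varepsilon_1\pm\varepsilon_2,\ \varepsilon_1\pm\varepsilon_3$ in $F(4)$ is a sum of two \emph{positive} odd roots, while in $B(m,n)$ any sum of two positive odd roots that is a root equals some $\delta_k+\delta_l$ or $2\delta_k$; then $\iota(\varepsilon_1)=\iota(\varepsilon_1-\varepsilon_2)+\iota(\varepsilon_2)$ forces $\iota(\varepsilon_2)$ (and likewise $\iota(\varepsilon_3)$) to have the form $\delta_i-\delta_j$, and no two roots of that form in $C_n$ have both their sum and their difference again equal to roots --- contradicting that $\varepsilon_2+\varepsilon_3$ and $\varepsilon_2-\varepsilon_3$ are both roots of $F(4)$.

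There is also a smaller gap in the claim $D(2,1;\alpha)\not\hookrightarrow A(m-1,n-1)$. The cited lemma only concludes that the source is of type $A(r,s)$ \emph{or} $D(r,s)$, and $D(2,1;\alpha)$ is combinatorially of type $D(2,1)$, so the lemma does not exclude it; nor does restriction to even parts help, since $A_1\oplus A_1\oplus A_1$ \emph{does} embed additively into $A_{m-1}$ as three mutually orthogonal roots (no two of which sum to a root). The correct obstruction is the same odd-plus-odd device you used for $C(n+1)$: in $D(2,1;\alpha)$ the positive odd roots $(\varepsilon_1+\varepsilon_2+\varepsilon_3)$ and $(\varepsilon_1-\varepsilon_2-\varepsilon_3)$ sum to the even root $2\varepsilon_1$, whereas in $A(m-1,n-1)$ the sum $(\delta_k-\varepsilon_i)+(\delta_l-\varepsilon_j)$ of two positive odd roots is never a root. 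For comparison, the paper's own justification is a single sentence about simple-root coefficients in the target algebras, so your route is entirely different and far more explicit; but as written it leaves $F(4)\hookrightarrow B(m,n)$, and strictly speaking $D(2,1;\alpha)\hookrightarrow A(m-1,n-1)$, unproved.
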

\begin{proof}
As even root in $A(m-1,n-1)$, $B(m,n)$, $C(n+1)$ and $D(m,n)$ are linear combination of distinguished simple roots with coefficient one.
\end{proof}
If $(\Delta_{1},\Delta_{2})$ is a splinter of $A(m-1,n-1)$ and $\Delta_{1}\cap A(m-1,0)\neq\phi$ and $\Delta_{2}\cap A(m-1,0)\neq\phi$ , then we find a splinter of $A(m-1,0)$ if we restrict to $\Delta_{1}$ and  $\Delta_{2}$. As $A(0,0)$ has only one odd root, so $A(0,0)$ does not splint. So all the splinter of $A(m,n)$ which are given in the table of $A(m-1,n-1)$ are explicitly described below.
\begin{enumerate}
\item The splinter ($A(2,n)+A_{2},2D_{2}+2nA(0,0)$) of $A(4,n)$ is given by
\begin{align*}
\Delta_{1} &= \{\varepsilon_{i}-\varepsilon_{j},\delta_{k}-\delta_{l},\delta_{k}-\varepsilon_{j}: 1\leq i \neq j \leq 3, 1\leq k \neq l \leq n\},\\
\Delta_{2} &=\{\varepsilon_{1}-\varepsilon_{j},\varepsilon_{2}-\varepsilon_{j},\delta_{k}-\varepsilon_{j}:4\leq j \leq 5, 1\leq k\leq n\}.
\end{align*}

\item The splinter ($A(m-1,0)+A_{n-1},(mn-m)A(0,0)$) of $A(m-1,n-1)$ is given by
 
\begin{align*}
\Delta_{1} &= \{\varepsilon_{i}-\varepsilon_{j},\delta_{1}-\varepsilon_{j},\delta_{k}-\delta_{l}: 1\leq i \neq j \leq m, 1\leq k \neq l \leq n\},\\
\Delta_{2} &= \{\delta_{k}-\varepsilon_{l}:2\leq k \leq n, 1\leq l\leq m\}.
\end{align*}
\item The splinter ($A(0,n-1)+A_{m-1},(mn-n)A(0,0)$) of $A(m-1,n-1)$ is given by

\begin{align*}
 \Delta_{1}&= \{\delta_{i}-\delta_{j},\delta_{j}-\varepsilon_{1}, \varepsilon_{k}-\varepsilon_{l} :1\leq i \neq j \leq n,1\leq k \neq l \leq m\},\\
\Delta_{2}&=\{\delta_{k}-\varepsilon_{l}: 1\leq k \leq m, 2\leq l\leq n\}.
\end{align*}
\item If $m-n=1$ ,then the splinter ($A(n-1,n-1)$ , $nA_{1}+nA(0,0)$) of $A(m-1,n-1)$ is given by
 \begin{align*}
 \Delta_{1}&= \{\varepsilon_{i}-\varepsilon_{j},\delta_{i}-\delta_{j},\delta_{i}-\varepsilon_{j}:1\leq i\neq j\leq n\},\\
  \Delta_{2}&= \{\varepsilon_{i}-\varepsilon_{m},\delta_{i}-\varepsilon_{m}: 1 \leq i \leq n \}.
  \end{align*}
\item The splinter   ($A(1,n)+A_{m-2}$ , $(m-2)A_{1}+n(m-2)A(0,0)$) of $A(m-1,n-1)$ is given by
\begin{align*} 
\Delta_{1}&= \{\varepsilon_{1}-\varepsilon_{2},\delta_{k}-\delta_{l},\delta_{k}-\varepsilon_{1},\delta_{k}-\varepsilon_{2}: 1\leq k \neq l \leq n\} \cup \{\varepsilon_{i}-\varepsilon_{j}:2\leq i \neq j \leq m\},\\
\Delta_{2}&= \{\varepsilon_{1}-\varepsilon_{j},\delta_{i}-\varepsilon_{j}: 3\leq j \leq m ,1\leq i \leq n\}.
\end{align*}
\item The splinter   ($A(m,1)+A_{n-2}$ , $(n-2)A_{1}+m(n-2)A(0,0)$) of $A(m-1,n-1)$ is given by
\begin{align*}
\Delta_{1}&= \{\varepsilon_{k}-\varepsilon_{l},\delta_{1}-\delta_{2},\delta_{1}-\varepsilon_{k},\delta_{2}-\varepsilon_{k}: 1\leq k \neq l \leq m\} \cup \{\delta_{i}-\delta_{j}:2\leq i \neq j \leq n\},\\
\Delta_{2}&= \{\delta_{1}-\delta_{j},\delta_{j}-\varepsilon_{k}: 3\leq j \leq n ,1\leq k \leq m\}.
\end{align*}
\item For $m,n\geq 2$, the splinter  ($A(m-2,n-2)+A(1,0), (m+n-3)A_{1}+(m+n-3)A(0,0)$) of $A(m-1,n-1)$ is given by
\begin{align*}
 \Delta_{1}&= \{\varepsilon_{i}-\varepsilon_{j},\delta_{k}-\delta_{l},\delta_{k}-\varepsilon_{i}: 2\leq i \neq j \leq m,2\leq k \neq l \leq n\} \cup \{\varepsilon_{1}-\varepsilon_{2},\delta_{1}-\varepsilon_{1},\delta_{1}-\varepsilon_{2}\},\\
 \Delta_{2}&= \{\varepsilon_{1}-\varepsilon_{i},\delta_{1}-\delta_{k},\delta_{1}-\varepsilon_{i},\delta_{k}-\varepsilon_{1}: 3\leq i \neq m ,2\leq k \leq n\}.
 \end{align*}
 \item For $m,n\geq 2$, the splinter ($A(m-1,n-2),(n-1)A_{1}+mA(0,0)$) of $A(m-1,n-1)$ is given by 
 \begin{align*}
 \Delta_{1}&= \{\varepsilon_{i}-\varepsilon_{j},\delta_{k}-\delta_{l},\delta_{k}-\varepsilon_{i}: 1\leq i \neq j \leq m, 1\leq k \neq l \leq n-1\},\\
 \Delta_{2}&= \{\delta_{k}-\delta_{n},\delta_{n}-\varepsilon_{i}: 1\leq k \leq n-1, 1\leq i \leq m\}.
 \end{align*}
 \item For $m,n\geq 2$, the splinter ($A(m-2,n-1),(m-1)A_{1}+nA(0,0)$) of $A(m-1,n-1)$ is given by 
 \begin{align*}
 \Delta_{1}&= \{\varepsilon_{i}-\varepsilon_{j},\delta_{k}-\delta_{l},\delta_{k}-\varepsilon_{i}: 1\leq i \neq j \leq m-1, 1\leq k \neq l \leq n\},\\
 \Delta_{2}&= \{\varepsilon_{j}-\varepsilon_{m},\delta_{i}-\varepsilon_{m}: 1\leq j \leq m-1, 1\leq i \leq n\}.
 \end{align*}
 \item For $m=n$, the splinter ($A(m-1,m-2),(m-1)A_{1}+mA(0,0)$) of $A(m-1,n-1)$ is given by
  \begin{align*}
  \Delta_{1}&= \{\varepsilon_{i}-\varepsilon_{j},\delta_{k}-\delta_{l},\delta_{k}-\varepsilon_{i}: 1\leq i \neq j \leq m, 1\leq k \neq l \leq m-1\},\\
  \Delta_{2}&= \{\delta_{i}-\delta_{m},\delta_{m}-\varepsilon_{j}: 1\leq i \leq m-1, 1\leq j \leq m\}.
  \end{align*}
  \item  For $m=n$ , the splinter ($A(m-2,m-1),(m-1)A_{1}+mA(0,0)$) of $A(m-1,n-1)$ is given by
 \begin{align*}
  \Delta_{1}&= \{\varepsilon_{i}-\varepsilon_{j},\delta_{k}-\delta_{l},\delta_{k}-\varepsilon_{i}: 1\leq i \neq j \leq m-1, 1\leq k \neq l \leq m\},\\
  \Delta_{2}&= \{\varepsilon_{i}-\varepsilon_{m},\delta_{m}-\varepsilon_{j}: 1\leq i \leq m-1, 1\leq j \leq m\}.
  \end{align*}
  \item For $m=n$, the splinter ($A(m-2,m-2),2(m-1)A_{1}+(2m-1)A(0,0)$) of $A(m-1,n-1)$ is given by
 \begin{align*}
  \Delta_{1}&= \{\varepsilon_{i}-\varepsilon_{j},\delta_{i}-\delta_{j},\delta_{i}-\varepsilon_{j}: 1\leq i \neq j \leq m-1\},\\
  \Delta_{2}&= \{\varepsilon_{i}-\varepsilon_{m},\delta_{i}-\delta_{m},\delta_{i}-\varepsilon_{m},\delta_{m}-\varepsilon_{j}: 1\leq i \leq m-1, 1\leq j \leq m\}.
  \end{align*}
\end{enumerate}

\FloatBarrier
 \begin{table}[b]
\caption {$A(m,n)$} \label{tab:title} 

\begin{tabular}{ | b{3cm} | m{6cm}| m{6cm} | } 
 \hline 
 
   $\Delta$     & $\Delta_{1}$ & $\Delta_{2}$ \\ \cline{1-3}  
      $A(1,0)$ & $A_{1}$ & $2A(0,0)$ \\ 
               
  \hline
   $A(0,1)$ & $A_{1}$ & $2A(0,0)$ \\ 
              
  \hline
   $A(1,1)$ & $A(0,1)$ & $A_{1}+2A(0,0)$ \\ 
               & $A(1,0)$ & $A_{1}+2A(0,0)$ \\
               & $2A_{1}$ & $4A(0,0)$ \\
  
 \hline           
 $A(1,2)$ & $A(0,2)$ & $2A_{1}+3A(0,0)$ \\ 
               & $A(1,1)$ & $2A_{1}+2A(0,0)$ \\
              
               & $A_{2}+2A(0,0)$ & $A(1,0)+2A(0,0)$ \\
                & $A_{1}+A_{2}$ & $6A(0,0)$ \\
  
 \hline                        
 $A(2,2)$ & $A(2,1)$ & $2A_{1}+3A(0,0)$ \\ 
               & $A(1,1)+A_{1}$ & $A(1,0)+2A_{1}+3A(0,0)$ \\
               & $A_{2}+A_{2}$ & $9A(0,0)$ \\
 \hline                  
  $A(0,2)$ & $A_{1}+A(0,1)$ & $A_{1}+A(0,0)$ \\ 
               
               & $A_{2}$ & $3A(0,0)$ \\ 
          
  \hline
  $A(4,4)$ & $A(2,4)+A_{2}$ & $2D_{2}+10A(0,0)$ \\
           & $A(4,2)+A_{2}$ & $2D_{2}+10A(0,0)$ \\
   \hline
     $A(4,n)$ & $A(2,n)+A_{2}$ & $2D_{2}+2nA(0,0)$ \\ 
\hline   
 $A(m-1,n-1)$ &$A(m-1,0)+A_{n-1}$ & $(mn-m)A(0,0)$\\
              &$A(0,n-1)+A_{m-1}$ & $(mn-n)A(0,0)$\\
 \hline

   $A(m-1,n-1)$  &$A(1,n)+A_{m-2}$ & $(m-2)A_{1}+n(m-2)A(0,0)$\\
             &$A(m,1)+A_{n-2}$ & $(n-2)A_{1}+m(n-2)A(0,0)$\\
 \hline 
 if $m-n=1$,  $A(m-1,n-1)$  &$A(n-1,n-1)$ & $nA_{1}+nA(0,0)$\\ 
 \hline   
    \end{tabular} 
\end{table}
\FloatBarrier
 \begin{table}[h]

\begin{tabular}{ | b{3cm} | m{6cm}| m{6cm} | }     
          
 \hline                      
   $A(m-1,m-1)$  &$A(m-1,m-2)$ & $(m-1)A_{1}+mA(0,0)$\\
              &$A(m-2,m-1)$ & $(m-1)A_{1}+mA(0,0)$\\
          &$A(m-2,m-2)$ & $2(m-1)A_{1}+(2m-1)A(0,0)$\\
         
\hline

  $A(m-1,n-1)$  for $m,n\geqslant2$ & $A(m-2,n-2)+A(1,0)$ & $(m+n-3)A_{1}+(m+n-3)A(0,0)$ \\ 
  & $A(m-2,n-1)$ & $(m-1)A_{1}+nA(0,0)$ \\
              & $A(m-1,n-2)$ & $(n-1)A_{1}+ mA(0,0)$  \\
  \hline                  
\end{tabular} 
\end{table} 

\section{Splints of Lie superalgebra $B(m,n)$ and $B(0,n)$ }
The basic Lie superalgebra $B(m,n)$ has rank $m+n$ and the positive root system is given by 
\[\Delta=\{\varepsilon_{i}\pm\varepsilon_{j},\varepsilon_{i},\delta_{k}\pm\delta_{l},2\delta_{k},\delta_{k}\pm\varepsilon_{i},\delta_{k}: 1\leq i\neq j \leq m , 1\leq k \neq l \leq n \},\]
with the normalization
\[(\varepsilon_{i},\varepsilon_{j})=-\delta_{ij},~~(\delta_{k},\delta_{l})=\delta_{kl},~~(\varepsilon_{i},\delta_{k})=0 ,~~~~~ for ~~1\leq i,j \leq m , 1\leq k,l \leq n .\]
\begin{lemma}
$C(n+1)$ is not embedded in $B(m,n)$ for $m>3,~n\geq2$.
\end{lemma}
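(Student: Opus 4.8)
The plan is to assume an embedding $\iota\colon C(n+1)\hookrightarrow B(m,n)$ exists and to extract a contradiction from the rigid shape of the odd roots. I realize the positive roots of $C(n+1)$ as the even roots $x_i\pm x_j$ $(i<j)$, $2x_i$, together with the odd roots $y\pm x_i$ $(1\le i\le n)$, where $y$ is the single coordinate coming from the $so(2)$ factor; thus \emph{every} odd root of $C(n+1)$ carries $y$ with coefficient $+1$. By contrast every odd root of $B(m,n)$ is $\delta_k\pm\varepsilon_i$ or $\delta_k$, i.e. it has a single $\delta$ and at most one $\varepsilon$. The first observation is that each even root of $C(n+1)$ is a difference of two odd roots read off from genuine additive relations among positive roots, namely $2x_i=(y+x_i)-(y-x_i)$, $x_i-x_j=(y-x_j)-(y-x_i)$ and $x_i+x_j=(y+x_j)-(y-x_i)$. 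Hence each image $\iota(2x_i),\iota(x_i\pm x_j)$ is a difference of two roots of the form $\delta_k\pm\varepsilon_i$ or $\delta_k$, so its $\delta$-part is $\delta_a-\delta_b$ or $0$; in particular no even root of $C(n+1)$ can map to $2\delta_c$ or to $\delta_c+\delta_d$. Thus every even image lies in the subsystem $B_m\sqcup A_{n-1}$, where $A_{n-1}=\{\delta_a-\delta_b\}$ and $B_m=\{\varepsilon_i\pm\varepsilon_j,\varepsilon_i\}$.

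Next I would push the whole even subsystem into the $B_m$ component. The even roots of $C(n+1)$ form the irreducible system $C_n$, and if two of them sum to a root while their images lie in different components, then $\iota$ of the sum mixes $\varepsilon$ and $\delta$ and is not a root; since consecutive simple roots of $C_n$ sum to roots, all even images must lie in a single component. They cannot all lie in $A_{n-1}$: the relation $2x_{n-1}=2(x_{n-1}-x_n)+2x_n$ forces $\iota(2x_{n-1})=2\,\iota(x_{n-1}-x_n)+\iota(2x_n)$ to be a root, but with $\delta$-type images this expression carries a coordinate with coefficient $\pm2$, which is impossible. Hence every even root of $C(n+1)$ maps to a pure-$\varepsilon$ root of $B(m,n)$. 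I expect this reduction to be the main obstacle, as it is the step that genuinely exploits the mismatch between the long roots $2x_i$ of $C_n$ and the absence of any $2\delta$ in the image.

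Once the even part sits inside $B_m$, the relations $\iota(y-x_j)=\iota(y-x_i)+\iota(x_i-x_j)$ and $\iota(y+x_i)=\iota(y-x_i)+\iota(2x_i)$, whose added terms are now pure-$\varepsilon$, show that all odd images share one and the same $\delta$-coordinate $\delta_k$. Writing $\iota(y\pm x_i)=\delta_k+e_i^{\pm}$ with $e_i^{\pm}\in\{0,\varepsilon_j,-\varepsilon_j\}$, the two ways of writing $x_i+x_j$, namely $(y-x_j)+(x_i+x_j)=y+x_i$ and $(y-x_i)+(x_i+x_j)=y+x_j$, give $e_i^{+}-e_j^{-}=e_j^{+}-e_i^{-}$, so $e_i^{+}+e_i^{-}$ is independent of $i$; call it $s$. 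A short case check over the possible values of $s$ (a sum of two elements of $\{0,\pm\varepsilon_j\}$) then finishes the proof: if $s=0$ then $\iota(2x_i)=e_i^{+}-e_i^{-}=2e_i^{+}$ is never a nonzero root; if $s=\pm2\varepsilon_a$ then $e_i^{+}=e_i^{-}$, violating injectivity; and in every remaining case ($s=\pm\varepsilon_a$ or $s=\varepsilon_a\pm\varepsilon_b$) the pair $\{e_i^{+},e_i^{-}\}$ is forced to be the unique two-element set summing to $s$, so all $2n\ge4$ numbers $e_i^{\pm}$ take at most two distinct values, contradicting the injectivity of $\iota$. This contradiction shows that no embedding can exist; the argument uses only $n\ge2$, so it covers the stated range $m>3,\ n\ge2$.
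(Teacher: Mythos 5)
Your proof is correct, and it takes a genuinely different route from the paper's. The paper argues in the opposite direction: it asserts (appealing to the claim that the even system $C_3$ does not embed in $B_m$) that the image of the even part $C_n$ of $C(n+1)$ must be the block $\{\delta_k\pm\delta_l,2\delta_k\}$ of $B(m,n)$, and then notes that the odd root $\varepsilon+\delta_1$, being the odd simple root plus an even combination involving the long roots, would have to map to an odd root of $B(m,n)$ whose $\delta$-part has coefficient bigger than one --- and no such odd root exists. You prove the opposite confinement: by writing each even root of $C(n+1)$ as a difference of odd roots arising from genuine positive-root relations, you show that no even image can be $2\delta_k$ or $\delta_k+\delta_l$, then force the whole even system into the $B_m$ block by connectivity, and finally obtain the contradiction from the odd roots via the invariant $s=e_i^{+}+e_i^{-}$. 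Your route buys quite a lot: it is self-contained (the paper's key assertion that $C_3\not\hookrightarrow B_m$ is stated without proof), it uses no hypothesis on $m$, and it covers $n=2$, where the paper's first step actually breaks down, since $C_2\cong B_2$ does embed additively in $B_m$ (send $x_1-x_2\mapsto\varepsilon_2$, $2x_2\mapsto\varepsilon_1-\varepsilon_2$, $x_1+x_2\mapsto\varepsilon_1$, $2x_1\mapsto\varepsilon_1+\varepsilon_2$); the paper's argument, in the range where its first step is valid, is shorter. Two small points you should make explicit when writing this up: in ruling out the $A_{n-1}$ case, the vector $2\iota(x_{n-1}-x_n)+\iota(2x_n)$ could fail to carry a $\pm2$ coefficient only if $\iota(2x_n)=-\iota(x_{n-1}-x_n)$, which must be excluded (immediate, since both are positive roots; or because it would force $\iota(x_{n-1}+x_n)=0$); and your final case list should explicitly include $s=-\varepsilon_a-\varepsilon_b$ and $s=-\varepsilon_a+\varepsilon_b$, which are handled by exactly the same uniqueness-of-the-pair argument.
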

\begin{proof}
Suppose $C(n+1) \hookrightarrow B(m,n)$. As the even roots of $C_{3}$ does not embed in $B_{m}$ for $m\geq2$, hence the image of even part of $C(n+1)$ under the map $\iota$ is $\{\delta_{k}\pm\delta_{l},2\delta_{k}\}$ where $1\leq k \neq l \leq n$. Now without loss of generality, the distinguished simple root system of $C(n+1)$ under the map $\iota$ is $\{\alpha_{1},\alpha_{2},\cdots\alpha_{n-1},2\alpha_{n}+2\alpha_{n+1}+\cdots+2\alpha_{n-1}\} \cup \{\beta\}$, where $\beta$ is an odd root of $B(m,n)$ and $\alpha_{1}= \delta_{1}-\delta_{2}$, $\alpha_{2}= \delta_{2}-\delta_{3}$,$\cdots$, $\alpha_{n-1}= \delta_{n-1}-\delta_{n}$ which belong to distinguished simple roots of even part of $B(m,n)$. $C(n+1)$ has an odd root $\beta+\alpha_{1}+\alpha_{2}+\cdots+\alpha_{n-1}+2\alpha_{n}+2\alpha_{n+1}+\cdots+2\alpha_{n-1}$ but $B(m,n)$ has no such odd root.
\end{proof}
\begin{lemma}
$B(m,n-1)$ and $B(m-1,n)$ are not a component of $B(m,n)$.
\end{lemma}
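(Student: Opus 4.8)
The plan is to assume, for contradiction, that $B(m,n-1)$ (respectively $B(m-1,n)$) occurs as a component of a splint $(\Delta_{1},\Delta_{2})$ of $B(m,n)$, realized by an embedding $\iota_{1}$ on that component, and then to show that the forced shape of $\iota_{1}$ leaves a complement that cannot be covered by further embedded root systems without violating condition (2) (the rank bound) in the definition of a splint. Throughout I use that an embedding, commuting with addition of roots, is the restriction of an injective linear map: every positive root is an iterated sum of simple roots all of whose partial sums are roots, so $\iota$ is determined linearly on the positive roots, preserves rank, and has the property that an image relation $\iota(\alpha)+\iota(\beta)=\iota(\gamma)$ forces $\alpha+\beta=\gamma$ in the source. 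The first step is to pin down the image. Since $2\delta_{k}$ is even and $\iota_{1}(2\delta_{k})=2\iota_{1}(\delta_{k})$ must be even, and the only even roots of $B(m,n)$ of the form $2(\text{odd root})$ are the $2\delta_{\ell}$, each $\delta_{k}$ satisfies $\iota_{1}(\delta_{k})=\delta_{\ell}$; thus the non-isotropic odd roots are mapped injectively among $\{\delta_{1},\dots,\delta_{n}\}$. Combining $\delta_{k}+\varepsilon_{i}=\delta_{k}+\varepsilon_{i}$ with positivity then forces every short even root $\varepsilon_{i}$ to a short even root $\varepsilon_{j}$. After relabelling by a Weyl element, $\iota_{1}$ is the standard inclusion onto the sub-system on the first $n-1$ indices in $\delta$ and all of $\varepsilon$ (for $B(m,n-1)$), respectively all of $\delta$ and the first $m-1$ indices in $\varepsilon$ (for $B(m-1,n)$). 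In particular every short even root $\varepsilon_{i}$ lies in $\mathrm{Im}(\iota_{1})$ in the first case, and every $\delta_{k}$ lies in $\mathrm{Im}(\iota_{1})$ in the second.

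For $B(m-1,n)$ the complement $\Delta_{2}=\Delta\setminus\mathrm{Im}(\iota_{1})$ consists exactly of the roots involving $\varepsilon_{m}$, namely $\{\varepsilon_{i}\pm\varepsilon_{m},\varepsilon_{m},\delta_{k}\pm\varepsilon_{m}\}$. A direct check of all pairwise sums shows that no sum of two of these roots is again one of them (each such sum either fails to be a root or lands in $\mathrm{Im}(\iota_{1})$), and that there is no pair $\{\alpha,2\alpha\}$ inside $\Delta_{2}$. Hence any root system with image $\Delta_{2}$ has no two positive roots adding to a root and no doubled root, so each of its irreducible components must have rank one (an $A_{1}$ or an $A(0,0)$). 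Its rank therefore equals the number $2m+2n-1$ of its positive roots, which exceeds $m+n$; this violates condition (2), giving the contradiction in this case.

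For $B(m,n-1)$ the complement is $\{\delta_{k}\pm\delta_{n},\,2\delta_{n},\,\delta_{n}\pm\varepsilon_{i},\,\delta_{n}\}$, and now there are genuine additive relations, so the rank count alone does not finish. Instead I use $2\delta_{n}=(\delta_{n}+\varepsilon_{i})+(\delta_{n}-\varepsilon_{i})=\delta_{n}+\delta_{n}$: passing to preimages under $\iota_{2}$ (legitimate since the embedding reflects image relations) places all the odd roots of the complement together with $\iota_2^{-1}(2\delta_{n})$ in a single irreducible component $\Delta_{2}^{(a)}$. This component is an irreducible basic classical Lie superalgebra carrying an odd root $\nu$ with $2\nu$ a root, and $2\nu$ splits into several distinct pairs of odd roots. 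Among the classical superalgebras only the $B$-series has an odd root whose double is a root, while $F(4)$ and $G(3)$ are excluded from $B(m,n)$ by the earlier lemma; hence $\Delta_{2}^{(a)}\cong B(M,N)$ with $M\ge 1$. But then $\Delta_{2}^{(a)}$ has a short even root $\varepsilon_{\ell}$, and from $\varepsilon_{\ell}=(\delta_{k_{0}}+\varepsilon_{\ell})-\delta_{k_{0}}$ together with $\iota_{2}(\delta_{k_{0}})=\delta_{n}$ one obtains $\iota_{2}(\varepsilon_{\ell})=\varepsilon_{i}$, forcing a short even root into $\Delta_{2}$. This contradicts the first step, where every $\varepsilon_{i}$ was already placed in $\mathrm{Im}(\iota_{1})$.

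The main obstacle is the $B(m,n-1)$ case: unlike $B(m-1,n)$, its complement is additively connected and of exactly the right size to look like a legitimate component, so the contradiction cannot come from counting ranks and must instead exploit the rigidity of the single surviving non-isotropic odd root $\delta_{n}$ and the fact that any hosting $B(M,N)$ inevitably drags in short even roots that $\iota_{1}$ has already consumed. The two points needing the most care are verifying that embeddings reflect the image-level additive relations (so the odd roots really do collapse into one irreducible component) and confirming that no classical superalgebra other than the $B$-series can host a doubled odd root; both are what make the argument go through rather than a naive dimension count.
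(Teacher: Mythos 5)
Your $B(m-1,n)$ case is correct, and it is in fact more self-contained than the paper's treatment (the paper restricts the splint to a metrically embedded subsystem, while you argue directly in $B(m,n)$: the complement admits no internal sums and no pair $\{\alpha,2\alpha\}$, so every component is $A_1$ or $A(0,0)$ and the rank is $2m+2n-1>m+n$). The genuine gap is in the $B(m,n-1)$ case, and it is precisely the principle you flag as delicate: embeddings, as defined in this paper, do \emph{not} reflect image relations. The definition only requires $\iota(\alpha)+\iota(\beta)=\iota(\gamma)$ whenever $\gamma=\alpha+\beta$ holds \emph{in the source}; a relation among images need not come from a relation in the source, and when the source is reducible such relations routinely cross components. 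Concretely, mapping $3A_1$ with roots $\alpha,\beta,\gamma$ into $A_2$ by $\alpha\mapsto a$, $\beta\mapsto b$, $\gamma\mapsto a+b$ is a legitimate embedding with $\iota(\alpha)+\iota(\beta)=\iota(\gamma)$ but $\alpha+\beta\neq\gamma$; your proposed linear extension exists but is not injective, so ``reflection'' fails. (The paper's own tables exhibit this: in the splint $(2A_1,4A(0,0))$ of $A(1,1)$ the four image roots of $4A(0,0)$ are even linearly dependent.) Consequently your deduction that $\delta_n$, the $\delta_n\pm\varepsilon_i$, and the preimage of $2\delta_n$ must sit inside one irreducible component collapses: the complement $\{\delta_k\pm\delta_n,\,2\delta_n,\,\delta_n\pm\varepsilon_i,\,\delta_n\}$ is covered by pairwise disjoint images of $B(0,1)\mapsto\{\delta_n,2\delta_n\}$, one $A(0,0)$ for each $\delta_n\pm\varepsilon_i$, and one $A_1$ for each $\delta_k\pm\delta_n$; the relations $(\delta_n-\varepsilon_i)+(\delta_n+\varepsilon_i)=2\delta_n$ then cross components, which is allowed. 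No component isomorphic to a $B(M,N)$ is forced, and your contradiction never materializes.

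Ironically, the rank count you abandoned does finish this case, and that is essentially the paper's route. Classify the irreducible systems embeddable in the complement: a component with two or more odd roots would need a relation of type odd$+$odd$=$even or odd$+$even$=$odd realized inside the complement, and a check of the candidates ($A(1,0)$, $A(0,1)$, $C(2)$, $B(1,1)$, $B(0,2)$) rules all of them out, since the only such relations available are $\delta_n+\delta_n=2\delta_n$ and $(\delta_n-\varepsilon_i)+(\delta_n+\varepsilon_i)=2\delta_n$, and these cannot be configured as required (e.g.\ $B(1,1)$ and $B(0,2)$ would need two odd roots whose doubles lie in the complement, but only $\delta_n$ has this property). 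Hence the only possible components are $A_1$, $A(0,0)$, $B(0,1)$ with image $\{\delta_n,2\delta_n\}$, and $A_2$ with image $\{\delta_k-\delta_n,2\delta_n,\delta_k+\delta_n\}$. Since $2\delta_n$ can be used by at most one component, any cover of the $2m+2n$ roots has total rank at least $2m+2n-1>m+n$, violating condition (2) of the splint definition. The paper reaches the same contradiction more quickly by restricting the splint to the metrically embedded $B(0,n)$, where the leftover piece $\{\delta_k\pm\delta_n,2\delta_n,\delta_n\}$ already forces rank at least $2n-1>n$; either way, the contradiction is a rank violation, not a structural one.
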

\begin{proof}
Suppose $(\Delta_{1},\Delta_{2})$ is a splint of $B(m,n)$ and $B(m,n-1)\hookrightarrow \Delta_{1}$. Without loss of generality, we may assume that the image of $B(m,n-1)$ under $\iota$ is $\{\varepsilon_{i}\pm\varepsilon_{j},\varepsilon_{i},\delta_{k}\pm\delta_{l},2\delta_{k},\delta_{k}\pm\varepsilon_{i},\delta_{k}:1\leqslant i\neq j\leqslant m , 1\leqslant k \neq l\leqslant n-1\}$. $B(0,n)$ is metrically embedded in $B(m,n)$. So restricting the splints to $B(0,n)$, we get a splints of $B(0,n)$ say $\Delta_{2}^\prime=\{\delta_{k}\pm\delta_{n},2\delta_{k} ,\delta_{k}:1\leq k \leq n-1 \}$.  But the rank of $\Delta_{2}^\prime$ is greater then $B(0,n)$. Hence we get a contradiction.
\end{proof}
We can describe all the splinter of $B(m,n)$ in the following way,
\begin{enumerate}
\item The splinter $(A(0,1),A_{1}+2A(0,0))$ of $B(1,1)$ is given by
\begin{align*}
  \Delta_{1}&=\{\delta_{1}-\varepsilon_{1},\varepsilon_{1},\delta_{1}\},\\
   \Delta_{2}&=\{\delta_{1}+\varepsilon_{1},2\delta_{1}\}.
   \end{align*}
   \item The splinter  $(A(0,1)+A_{1},3A_{1}+4A(0,0))$ of $B(1,2)$ is given by
  \begin{align*}
   \Delta_{1}&=\{\delta_{1}-\delta_{2},\delta_{1}-\varepsilon_{1},\delta_{2}-\varepsilon_{1},\varepsilon_{1}\},\\
   \Delta_{2}&=\{\delta_{1}+\delta_{2},2\delta_{1},2\delta_{2},\delta_{2}+\varepsilon_{1},\delta_{1}+\varepsilon_{1},\delta_{1},\delta_{2}\}.
   \end{align*}
   Another splinter of $B(1,2)$ is  ($B(0,2),A_{1}+4A(0,0)$) which is given by
 \begin{align*}
   \Delta_{1}&=\{\delta_{1}\pm\delta_{2},2\delta_{1},2\delta_{2},\delta_{1},\delta_{2}\},\\
   \Delta_{2}&=\{\varepsilon_{1},\delta_{1}\pm\varepsilon_{1},\delta_{2}\pm\varepsilon_{1}\}.
   \end{align*}
\item The splinter ($A(1,1)+2A_{1},4A_{1}+6A(0,0)$) of $B(2,2)$ is given by
   \begin{align*}
   \Delta_{1}&=\{\varepsilon_{1}-\varepsilon_{2},\delta_{1}-\delta_{2},\delta_{1}-\varepsilon_{1},\delta_{1}-\varepsilon_{2},\delta_{2}-\varepsilon_{1},\delta_{2}-\varepsilon_{2}\}\cup \{\varepsilon_{1}+\varepsilon_{2},\delta_{1}+\delta_{2}\},\\
  \Delta_{2}&=\{\varepsilon_{1},\varepsilon_{2},2\delta_{1},2\delta_{2}\}\cup\{\delta_{1}+\varepsilon_{1},\delta_{1}+\varepsilon_{2},\delta_{2}+\varepsilon_{1},\delta_{2}+\varepsilon_{2},\delta_{1},\delta_{2}\}.
  \end{align*}
\item $B(0,2)$ has two additional splints. The first one ($A_{2},A_{1}+2A(0,0)$) is given by 
\begin{align*}
 \Delta_{1}&= \{\delta_{1}\pm\delta_{2},2\delta_{2}\},\\ 
 \Delta_{2}&= \{2\delta_{1},\delta_{1},\delta_{2}\}.
 \end{align*}
 and the second one ($2A_{1}+A(0,0),2A_{1}+A(0,0)$) is given by
  \begin{align*}
  \Delta_{1}&= \{\delta_{1}+\delta_{2},2\delta_{2}\}\cup\{\delta_{1}\}, \\
 \Delta_{2}&= \{\delta_{1}-\delta_{2},2\delta_{1}\}\cup\{\delta_{2}\}.
 \end{align*}
 \item The splinter  ($A_{1}+B(0,2),A_{1}+A_{2}+A(0,0)$) of $B(0,3)$ is given by \begin{align*}
  \Delta_{1}&= \{\delta_{1}\pm\delta_{2},2\delta_{1},2\delta_{2},\delta_{1},\delta_{2}\}\cup\{\delta_{2}-\delta_{3}\},\\
 \Delta_{2}&=\{\delta_{2}+\delta_{3}\}\cup\{\delta_{1}\pm\delta_{3},2\delta_{3}\}\cup\{\delta_{3}\}.
 \end{align*}
             
\item The splinter   ($D_{n},nB(0,1))$ of $B(0,n)$ is given by
\begin{align*}
 \Delta_{1}&= \{\delta_{k}\pm\delta_{l}:1\leq k \neq l \leq n\},\\
 \Delta_{2}&= \{2\delta_{k},\delta_{k}:1\leq k \leq n\}.
 \end{align*}
 \item The splinter   ($C_{n},nA(0,0))$ of $B(0,n)$ is given by
\begin{align*}
 \Delta_{1}&= \{\delta_{k}\pm\delta_{l},2\delta_{k}:1\leq k \neq l \leq n\},\\
 \Delta_{2}&= \{\delta_{k}:1\leq k \leq n\}.
 \end{align*}
\item The splinter $(B(0,n)+B_{m} , 2mnA(0,0))$ and $(B_{m}+C_{n},  (2mn+n)A(0,0))$ of $B(m,n)$ are equivalent because when we restrict to even roots both the splinter are same. Hence consider the splinter $(B(0,n)+B_{m} , 2mnA(0,0))$ which is given by
\begin{align*} 
 \Delta_{1}&= \{\delta_{k}\pm\delta_{l},2\delta_{k},\delta_{k}\}\cup\{\varepsilon_{i}\pm\varepsilon_{j}\}, ~where ~ 1\leq i \neq j \leq m, 1\leq k \neq l \leq n\\ 
 \Delta_{2}&= \{\delta_{i}-\varepsilon_{k}: 1\leq i \neq j \leq m, 1\leq k \neq l \leq n\}.
\end{align*}
 \item  For $m\geqslant 2,~n\geqslant 1$ the splinter $(D(m,n), mA_{1}+nA(0,0))$ is given by
 \begin{align*}
  \Delta_{1}&= \{\delta_{k}\pm\delta_{l},2\delta_{k},\varepsilon_{i}\pm\varepsilon_{j},\delta_{k}-\varepsilon_{i}: 1\leq i \neq j \leq m, 1\leq k \neq l \leq n\},\\ 
  \Delta_{2}&= \{\delta_{i},\varepsilon_{k}: 1\leq i \leq m, 1\leq k  \leq n\}.
  \end{align*}
\end{enumerate}

\begin {table}[h]
\caption {$B(m,n)$} \label{tab:title} 

\begin{tabular}{ | b{3cm} | m{6cm}| m{6cm} | } 
\hline 
 $\Delta$     & $\Delta_{1}$ & $\Delta_{2}$ \\ \cline{1-3} 
  $B(0,1)$ & $A_{1}$ & $A(0,0)$ \\
  \hline
  $B(1,1)$ & $A(0,1)$ & $A_{1}+2A(0,0)$ \\
  \hline
   $B(1,2)$ & $B(0,2)$ & $A_{1}+4A(0,0)$ \\
            & $A(0,1)+A_{1}$ & $3A_{1}+4A(0,0)$ \\
   \hline
    $B(2,1)$ &$A(1,0)+A_{1}$ & $3A_{1}+3A(0,0)$ \\
    \hline
   
    $B(2,2)$ & $A(1,1)+2A_{1}$ & $4A_{1}+6A(0,0)$ \\
    \hline
    \end{tabular} 
\end {table}
    
    \begin {table}[h]
\begin{tabular}{ | b{3cm} | m{6cm}| m{6cm} | } 
     \hline
    $B(0,2)$ & $A_{2}$ & $A_{1}+2A(0,0)$ \\
             & $2A_{1}+A(0,0)$ & $2A_{1}+A(0,0)$ \\
     \hline
    $B(0,3)$ & $A_{1}+B(0,2)$ & $A_{1}+A_{2}+A(0,0)$ \\
    \hline
    $B(0,n)$ & $D_{n}$ & $nB(0,1)$ \\
     \hline
    $B(0,n)$ & $C_{n}$ & $nA(0,0)$ \\
    \hline
    $B(m,n)$ & $B(0,n)+B_{m}$ & $2mnA(0,0)$ \\
             
    \hline
    $B(m,n)$ for $m\geqslant 2, n\geqslant 1$& $D(m,n)$ & $mA_{1}+nA(0,0)$ \\
    \hline
\end{tabular} 
\end {table} 
\section{Splints of Lie superalgebra $C(n+1)$}
The basic Lie superalgebra $C(n+1)$ has rank $n+1$ and the positive root system is given by 
\[\Delta=\{\delta_{k}\pm\delta_{l},2\delta_{k},\varepsilon\pm\delta_{k}: 1\leq k \neq l \leq n \},\]
with the normalization
\[(\varepsilon,\varepsilon)=1,~~(\delta_{k},\delta_{l})=-\delta_{kl},~~(\varepsilon,\delta_{k})=0 ~~~~~ for ~~1\leq k,l \leq n .\]
\begin{lemma}
$C(n)$ is not a component of $C(n+1)$ for $n\geqslant3$.
\end{lemma}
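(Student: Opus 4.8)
The plan is to argue by contradiction, in the spirit of the preceding $B(m,n)$ lemmas. Suppose $n\geq 3$ and that some splint $(\Delta_1,\Delta_2)$ of $C(n+1)$ has $C(n)$ as a component, realised by an embedding $\iota\colon C(n)\hookrightarrow C(n+1)$ whose image lies in $\Delta_1$. The first job is to locate the image of $\iota$. Since an embedding respects the grading, the even part of $C(n)$ (a copy of $C_{n-1}$) maps into the even part $\{\delta_k\pm\delta_l,2\delta_k:1\leq k\neq l\leq n\}$ of $C(n+1)$; after applying a Weyl group element, which is allowed by our notion of equivalence, I may assume this image is the standard subsystem on $\delta_1,\dots,\delta_{n-1}$, so that $\iota(2\delta_k)=2\delta_k$ for $k\leq n-1$.

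Next I would determine the odd roots. Every odd root of $C(n+1)$ has the form $\varepsilon\pm\delta_j$, so each $\iota(\varepsilon\pm\delta_k)$ is again of this shape. Feeding the identity $(\varepsilon-\delta_k)+2\delta_k=\varepsilon+\delta_k$ through $\iota$ and using $\iota(2\delta_k)=2\delta_k$ leaves, after checking the handful of sign possibilities, only $\iota(\varepsilon-\delta_k)=\varepsilon-\delta_k$ and $\iota(\varepsilon+\delta_k)=\varepsilon+\delta_k$. Thus $\iota(C(n))$ is forced to be the standard sub-$C(n)$, and the set of roots of $C(n+1)$ left over is
\[
R=\{\,2\delta_n,\ \delta_k\pm\delta_n\ (1\leq k\leq n-1),\ \varepsilon\pm\delta_n\,\}.
\]
Counting by grading, $R$ has exactly two odd roots, $\varepsilon\pm\delta_n$, and $2n-1$ even roots, every one of them tied to the others only through the single long root $2\delta_n$.

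The heart of the argument is that this leftover cannot be packaged into the second half of a splint. Because $\iota_2$ is a grading-preserving bijection onto its image, the root system underlying $\Delta_2$ has the same number of positive odd and positive even roots as the portion of $R$ it carries. If $\Delta_2$ absorbed $R$ as a single connected piece it would be a classical Lie superalgebra with only two positive odd roots; but the only such algebras are $A(1,0),A(0,1),C(2),D(1,1),B(0,2)$, and none of these has more than four positive even roots, whereas $R$ supplies $2n-1\geq 5$. The inequality $2n-1>4$ is exactly the condition $n\geq 3$, which explains the hypothesis. Alternatively, and this is what closes the reducible case, one checks that the even roots of $R$ can only be assembled (as embedded images) into at most one copy of $A_{2}=\{\delta_k-\delta_n,2\delta_n,\delta_k+\delta_n\}$ together with isolated $A_{1}$'s, while the two odd roots either stay isolated or join $2\delta_n$ as a copy of $C(2)$; in every case the rank of $\Delta_2$ comes out as at least $2n-1$, which exceeds the rank $n+1$ of $C(n+1)$ for $n\geq 3$, contradicting the splint condition.

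The step I expect to be the real obstacle is precisely this last bookkeeping: ruling out every way of distributing the two odd roots and the $2n-1$ even roots of $R$ between $\Delta_2$ and a possible second, rank-one component of $\Delta_1$. One must be sure that no cleverer decomposition of $R$ into higher-rank pieces lowers its rank back to $n+1$, and here the rigidity of the star centred at $2\delta_n$, which blocks the formation of any $C_2$ or longer chain among the even roots, is what makes the rank estimate hold. Once that rigidity is established, both the counting obstruction and the rank obstruction deliver the contradiction and the lemma follows for all $n\geq 3$.
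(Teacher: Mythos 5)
Your proof is correct and takes essentially the same route as the paper's: assume $C(n)\hookrightarrow\Delta_{1}$, pin down the leftover roots, observe that they can only assemble into rank-one and rank-two pieces (at most one of which, through $2\delta_{n}$, has three roots), and conclude that the rank of $\Delta_{2}$ is at least $2n-1>n+1$ for $n\geqslant3$, contradicting the splint definition. Your bookkeeping is in fact more careful than the paper's one-line argument, which lists only $A_{1}$ and $D_{2}$ as possible components of $\Delta_{2}$ and overlooks the $A_{2}$ and $C(2)$ pieces centred at $2\delta_{n}$ that you correctly account for (they do not weaken the rank obstruction).
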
  
\begin{proof}
Suppose $(\Delta_{1},\Delta_{2})$ is a splint of $C(n+1)$ and $C(n)\hookrightarrow \Delta_{1}$. Then the other components of $\Delta_{1}$ are isomorphic to either $A_{1}$ or $A(0,0)$ and components of $\Delta_{2}$ are isomorphic to either $A_{1}$ or $D_{2}$, which implies rank of $\Delta_{2}$ is greater than $n+1$ . Hence a contradiction.
\end{proof}    
\begin{lemma}
$B(m,n)$ is not embedded in $C(n+1)$.
\end{lemma}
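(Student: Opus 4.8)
The plan is to argue by contradiction, exploiting a structural feature of $B(m,n)$ that $C(n+1)$ lacks: the non-isotropic odd root $\delta_k$ of $B(m,n)$ has the property that its double $2\delta_k$ is again a root (an even one), whereas no odd root of $C(n+1)$ enjoys this. So suppose $\iota:B(m,n)\hookrightarrow C(n+1)$ is an embedding. Fix an odd root $\delta_k$ of $B(m,n)$. Since $2\delta_k$ is an even root and $2\delta_k=\delta_k+\delta_k$, the addition rule for embeddings forces $\iota(2\delta_k)=\iota(\delta_k)+\iota(\delta_k)=2\,\iota(\delta_k)$.

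Next I would invoke the grading condition. Because $\delta_k$ is odd, $\iota(\delta_k)$ must be an odd root of $C(n+1)$, hence of the form $\varepsilon\pm\delta_j$ for some $j$. Then $\iota(2\delta_k)=2(\varepsilon\pm\delta_j)=2\varepsilon\pm 2\delta_j$, which has nonzero $\varepsilon$-component. On the other hand $2\delta_k$ is even, so $\iota(2\delta_k)$ must lie in the even root system of $C(n+1)$, namely $\{\delta_a\pm\delta_b,\,2\delta_a\}$, and every such root has zero $\varepsilon$-component. This contradiction shows no embedding $\iota$ exists. As a safeguard against reading $2\delta_k=\delta_k+\delta_k$ as a sum of a root with itself, one may instead take, when $m\geq 1$, the two distinct odd roots $\delta_k+\varepsilon_i$ and $\delta_k-\varepsilon_i$, whose sum is the even root $2\delta_k$; applying $\iota$ again yields a sum $(\varepsilon\pm\delta)+(\varepsilon\pm\delta)=2\varepsilon\pm\cdots$ carrying a spurious $2\varepsilon$ term that no even root of $C(n+1)$ can have.

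The crux of the argument — and essentially the only thing one must check — is the normalization-sensitive observation that the even roots of $C(n+1)$ span only the $\delta$-directions, so that $\varepsilon$ never appears in an even root; this is immediate from the explicit list $\{\delta_k\pm\delta_l,2\delta_k\}$. I would emphasize that the proof is uniform in $m$ and $n$ and does not depend on how the index of $C$ compares with that of $B$: the sole input is the presence in $B(m,n)$ of an odd root whose double is even, together with its absence in $C(n+1)$ (equivalently, $C(n+1)$ has only isotropic odd roots $\varepsilon\pm\delta_k$, for which no double is a root). The one genuine choice is selecting $\delta_k$, rather than a mixed odd root $\delta_k\pm\varepsilon_i$, as the root to track; with that choice the contradiction is short, and the lemma complements the earlier result that $C(n+1)$ is not embedded in $B(m,n)$.
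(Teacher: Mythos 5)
Your proof is correct, but it follows a genuinely different route from the paper's proof of this particular lemma. The paper argues via a root string through the odd simple root: it takes the odd root $\beta$ and an even root $\alpha$ from the distinguished simple system of $B(m,n)$ and claims that ``$\alpha+2\beta$ is an odd root'' while $C(n+1)$ has no such roots; as printed this is garbled, since $\alpha+2\beta$ is never a root of $B(m,n)$ --- the intended configuration is $\beta=\delta_{n}-\varepsilon_{1}$, $\alpha=\varepsilon_{1}$, for which $\beta+\alpha=\delta_{n}$ and $\beta+2\alpha=\delta_{n}+\varepsilon_{1}$ are odd roots, whereas in $C(n+1)$ no odd root $\beta'$ and even root $\alpha'$ have $\beta'+\alpha'$ and $\beta'+2\alpha'$ both roots. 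Your argument instead exploits the non-isotropic odd root $\delta_{k}$ whose double $2\delta_{k}$ is an even root; this is precisely the argument the paper reserves for the \emph{next} lemma, that $B(0,n)$ is not embedded in $C(n+1)$. The comparison favors your version: it is uniform in $m\geq 0$, so it proves this lemma and the following one simultaneously; it needs only the root lists (the even roots of $C(n+1)$ have no $\varepsilon$-component, while $2\iota(\delta_{k})$ would) rather than the distinguished basis plus a string computation; and your safeguard using the two distinct odd roots $\delta_{k}\pm\varepsilon_{i}$ summing to $2\delta_{k}$ removes the only conceivable objection, namely whether additivity may be invoked for a root added to itself. What the paper's string-based criterion buys in exchange is applicability to situations where neither root system contains a root whose double is a root, but that extra generality is not needed for this pair of algebras.
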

\begin{proof}
Consider an even root  $\alpha$ and the odd root $\beta$ in the distinguished simple root system of $B(m,n)$. Then $\alpha+2\beta$ is a odd root, but $C(n+1)$ has no such roots.
\end{proof}
\begin{lemma}
$B(0,n)$ is not embedded in $C(n+1)$.
\end{lemma}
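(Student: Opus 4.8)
The plan is to argue by contradiction, exactly in the spirit of the two preceding lemmas on $C(n+1)$: assume an embedding $\iota\colon B(0,n)\hookrightarrow C(n+1)$ exists and extract an impossible root. A rank count alone will not suffice here, since $B(0,n)$ has rank $n$ while $C(n+1)$ has rank $n+1$, and the even parts of both are of type $C_n$, so a putative embedding of the even part is not immediately ruled out. The obstruction must therefore come from the odd roots together with the additivity requirement in the definition of an embedding. First I would record the roots explicitly: for $B(0,n)$ the even roots are $\{\delta_k\pm\delta_l,\,2\delta_k\}$ and the odd roots are the $\{\delta_k\}$, while for $C(n+1)$ the even roots are $\{\delta_k\pm\delta_l,\,2\delta_k\}$ and the odd roots are $\{\varepsilon\pm\delta_k\}$. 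The decisive structural feature to isolate is that in $C(n+1)$ the coordinate $\varepsilon$ occurs with coefficient $0$ in every even root and with coefficient $\pm1$ in every odd root, so \emph{every} root of $C(n+1)$ has $\varepsilon$-coefficient lying in $\{-1,0,1\}$.

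The heart of the argument is the single relation $2\delta_k=\delta_k+\delta_k$ inside the root system of $B(0,n)$, in which $\delta_k$ is an odd root and $2\delta_k$ is an even root. Since $\iota$ respects the grading, $\iota(\delta_k)$ is an odd root of $C(n+1)$, hence of the form $\varepsilon\pm\delta_j$, and in particular its $\varepsilon$-coefficient is $\pm1$. Since $\iota$ preserves additive relations, applying it to $2\delta_k=\delta_k+\delta_k$ gives
\begin{align*}
\iota(2\delta_k)=\iota(\delta_k)+\iota(\delta_k)=2\,\iota(\delta_k)=2\varepsilon\pm2\delta_j .
\end{align*}
Thus $\iota(2\delta_k)$ would have $\varepsilon$-coefficient $\pm2$. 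But $\iota(2\delta_k)$ must itself be a (necessarily even) root of $C(n+1)$, and by the previous paragraph no root of $C(n+1)$ has $\varepsilon$-coefficient outside $\{-1,0,1\}$. This is the contradiction, and it completes the proof; equivalently, $C(n+1)$ contains no root equal to twice an odd root, whereas $B(0,n)$ forces such a doubled odd root to appear in the image.

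The step I expect to require the most care is the bookkeeping of the embedding axioms on the degenerate sum $\delta_k+\delta_k$: one must confirm that the additivity clause ``$\iota(\gamma)=\iota(\alpha)+\iota(\beta)$ whenever $\gamma=\alpha+\beta$'' legitimately applies with $\alpha=\beta=\delta_k$ and $\gamma=2\delta_k$, all three being genuine members of the root system of $B(0,n)$, so that $\iota(2\delta_k)=2\,\iota(\delta_k)$ is forced. Once that is granted, the remainder is the routine verification that $\varepsilon$ is linearly independent from the $\delta$'s (immediate from $(\varepsilon,\varepsilon)=1$, $(\varepsilon,\delta_k)=0$) and that $2\varepsilon\pm2\delta_j$ matches none of $\delta_k\pm\delta_l$, $2\delta_k$. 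I would also remark that this gives the $m=0$ case that the earlier lemma on $B(m,n)\hookrightarrow C(n+1)$ does not cover, since that proof used an even simple root $\alpha$ alongside the odd simple root, a configuration unavailable when $m=0$.
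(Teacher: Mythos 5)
Your proposal is correct and follows exactly the paper's argument: the paper's one-line proof observes that $B(0,n)$ has an odd root $\beta$ (namely $\delta_k$) with $2\beta$ an even root, while $C(n+1)$ has no root equal to twice an odd root, and your write-up is precisely this obstruction carried out in detail via the $\varepsilon$-coefficient bookkeeping. Your explicit attention to the degenerate sum $\delta_k+\delta_k$ and to why $2\varepsilon\pm2\delta_j$ is not a root of $C(n+1)$ simply makes rigorous what the paper leaves implicit.
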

\begin{proof}
$B(0,n)$ has an odd root $\beta$ such that $2\beta$ is an even root, but $C(n+1)$ has no such roots.
\end{proof}
\begin{lemma}
$D(r,s)$ is not embedded in $C(n+1)$ for $r,s\leq n$
\end{lemma}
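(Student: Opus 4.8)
The plan is to obtain a contradiction from a single additive relation among the roots of $D(r,s)$ that cannot be transported into $C(n+1)$, exactly in the spirit of the two preceding lemmas (where the offending element was $\alpha+2\beta$, respectively $2\beta$). The governing structural fact I would isolate first is that $C(n+1)$ possesses a single even coordinate $\varepsilon$, so that every odd root $\varepsilon\pm\delta_k$ of $C(n+1)$ has $\varepsilon$-coefficient equal to $1$, whereas every even root $\delta_k\pm\delta_l$, $2\delta_k$ has $\varepsilon$-coefficient $0$. Consequently the sum of any two odd roots of $C(n+1)$ has $\varepsilon$-coefficient $2$ and is therefore never a root.

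Next I would exhibit in $D(r,s)$ a pair of odd roots whose sum is an even root. Writing the distinguished realization with odd roots $\delta_k\pm\varepsilon_i$ and even roots $\varepsilon_i\pm\varepsilon_j$, $\delta_k\pm\delta_l$, $2\delta_k$, the two odd roots $\delta_1-\varepsilon_1$ and $\delta_1+\varepsilon_1$ (which exist for any genuine $D(r,s)$, i.e. $r,s\geq1$) satisfy $(\delta_1-\varepsilon_1)+(\delta_1+\varepsilon_1)=2\delta_1$, an even root. Now suppose $\iota:D(r,s)\hookrightarrow C(n+1)$ is an embedding. By the grading condition $\iota(\Delta_1)\subseteq\Delta'_1$, both $\iota(\delta_1-\varepsilon_1)$ and $\iota(\delta_1+\varepsilon_1)$ are odd roots of $C(n+1)$, hence each carries $\varepsilon$-coefficient $1$; by the additive condition $\iota(2\delta_1)=\iota(\delta_1-\varepsilon_1)+\iota(\delta_1+\varepsilon_1)$, so $\iota(2\delta_1)$ has $\varepsilon$-coefficient $2$. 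But $2\delta_1$ is even, so $\iota(2\delta_1)\in\Delta'_0$ must have $\varepsilon$-coefficient $0$ --- a contradiction. Hence no such $\iota$ exists. I note that this argument in fact uses nothing about the bound $r,s\leq n$; it rules out the embedding for all $r,s\geq1$, which in particular covers the stated range.

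I do not expect a serious obstacle here; the only point requiring care is the positive-root bookkeeping. Since the definition works with positive root systems, I must check that both $\varepsilon+\delta_a$ and $\varepsilon-\delta_a$ genuinely lie in the listed positive odd roots of $C(n+1)$ --- they do --- so that the $\varepsilon$-coefficient of each image is forced to be exactly $+1$ and the sum carries coefficient exactly $2$. The whole obstruction is then the asymmetry between the single even coordinate of $C(n+1)$ and the two independent families $\varepsilon_i$, $\delta_k$ available in $D(r,s)$, which is precisely what lets $D(r,s)$ realize a sum of two odd roots as a genuine (even) root.
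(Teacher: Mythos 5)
Your proof is correct and takes essentially the same approach as the paper's: the paper's (much terser) argument is precisely that $D(r,s)$ has even roots expressible through odd roots while $C(n+1)$ has none, and your relation $(\delta_1-\varepsilon_1)+(\delta_1+\varepsilon_1)=2\delta_1$ together with the $\varepsilon$-coefficient count of odd roots in $C(n+1)$ is the concrete instantiation of exactly that obstruction. Your version is in fact more rigorous than the paper's one-line proof, and your remark that the hypothesis $r,s\leq n$ is never needed is also accurate.
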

\begin{proof}
We can observe from the properties of $D(r,s)$ that there are even roots which are linear combination of even as well as odd roots. But $C(n+1)$ does not have such type of even roots.
\end{proof}

We can describe all the splinters of $C(n+1)$ in the following way,
\begin{enumerate}
\item For $n\geq1$, $C(n+1)$ has a splint that is ($C_{n},2nA(0,0)$) is given by  
\begin{align*}
\Delta_{1}&=\{\delta_{k}\pm\delta_{l},2\delta_{k}:1\leq k \neq l \leq n\},\\
\Delta_{2}&=\{\varepsilon\pm\delta_{k}:1\leq k \neq l \leq n\}.
  \end{align*}   
\item $C(3)$ has two additional splinters that are ($A_{2}, C(2)+2A(0,0)$) which is given by 
 
  \begin{align*}
  \Delta_{1}&=\{\delta_{1}\pm\delta_{2},2\delta_{1}\},\\
  \Delta_{2}&=\{2\delta_{2}\varepsilon\pm\delta_{2}\}\cup\{\varepsilon\pm\delta_{1}\}
   \end{align*}
  and ($C(2)+A_{1},C(2)+A_{1}$) which is given by  
   \begin{align*}
   \Delta_{1}&=\{2\delta_{1},\varepsilon\pm\delta_{1},\}\cup\{\delta_{1}-\delta_{2}\},\\     
  \Delta_{2}&=\{2\delta_{2},\varepsilon\pm\delta_{2},\}\cup\{\delta_{1}+\delta_{2}\}.
   \end{align*}
  \item $C(4)$ has two additonal splinters that are  ($A_{1}+B_{2}+2A(0,0),A_{1}+A_{2}+4A(0,0)$) which is given by
   
  \begin{align*}
  \Delta_{1}&=\{\delta_{2}-\delta_{3}\}\cup\{2\delta_{1},2\delta_{2},\delta_{1}\pm\delta_{2}\}\cup\{\varepsilon\pm\delta_{3}\},\\
  \Delta_{2}&=\{\delta_{2}+\delta_{3}\}\cup\{2\delta_{3},\delta_{1}\pm\delta_{3}\}\cup\{\varepsilon\pm\delta_{1},\varepsilon\pm\delta_{2}\}
   \end{align*}
 and ($C(3)+A_{1},D_{2}+D_{2}+2A(0,0)$) is given by
   \begin{align*}
\Delta_{1}&=\{\delta_{1}\pm\delta_{2},2\delta_{1},2\delta_{2},\varepsilon\pm\delta_{1},\varepsilon\pm\delta_{2}\}\cup\{2\delta_{3}\},\\
 \Delta_{2}&=\{\delta_{1}\pm\delta_{3}\}\cup\{\delta_{2}\pm\delta_{3}\}\cup\{\varepsilon\pm\delta_{3}\}.
 \end{align*}
\end{enumerate}

\begin {table}[h]
\caption {$C(n+1)$} \label{tab:title} 
\begin{center}

\begin{tabular}{ | b{3cm} | m{6cm}| m{6cm} | } 
 \hline 
 $\Delta$     & $\Delta_{1}$ & $\Delta_{2}$ \\ \cline{1-3} 
  $C(2)$ & $A(1)$ & $2A(0,0)$ \\
\hline  
  $C(3)$ & $A_{2}$ & $C(2)+2A(0,0)$\\
         & $C(2)+A_{1}$ & $C(2)+A_{1}$\\

\hline
$C(4)$ & $A_{1}+B_{2}+2A(0,0)$ & $A_{1}+A_{2}+4A(0,0)$\\
       & $C(3)+A_{1}$ & $D_{2}+D_{2}+2A(0,0)$\\
\hline
   $C(n)$ & $C_{n}$ & $2nA(0,0)$  \\
 \hline      
 
\end{tabular} 
\end{center}
\end {table} 
\newpage 
\section{Splints of Lie superalgebra $D(m,n)$}
The basic Lie superalgebra $D(m,n)$ has rank $m+n$ and the positive root system is given by 
\[\Delta=\{\varepsilon_{i}\pm\varepsilon_{j},\delta_{k}\pm\delta_{l},2\delta_{k},\delta_{k}\pm\varepsilon_{i}: 1\leq i\neq j \leq m , 1\leq k \neq l \leq n \},\]
with the normalization
\[(\varepsilon_{i},\varepsilon_{j})=-\delta_{ij},~~(\delta_{k},\delta_{l})=\delta_{kl},~~(\varepsilon_{i},\delta_{k})=0 ~~~~~ for ~~1\leq i,j \leq m , 1\leq k,l \leq n .\]
\begin{lemma}
Suppose $m\geqslant2$,~$n\geqslant3$ and $r\geqslant2$. If $D(m,n)$ has a splinter where $D(m,r)$ is a component, then $D(m,n-1)$ has a splinter having $D(m,r-1)$ as a component.
\end{lemma}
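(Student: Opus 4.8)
The plan is to mirror the reduction argument already used for $A(m-1,n-1)$ earlier in the paper, now peeling off a single $\delta$-coordinate instead of one $\varepsilon$ and one $\delta$ simultaneously. First I would fix a splinter $(\Delta_1,\Delta_2)$ of $D(m,n)$ together with an embedding $\iota : D(m,r) \hookrightarrow \Delta_1$ exhibiting $D(m,r)$ as a component. By the metric-embedding lemma proved above (every embedding into $D(m,n)$ is metric), $\iota$ is automatically metric, so this need not be assumed separately. Using the even Weyl group $W(D_m)\times W(C_n)$, which permutes the $\delta_k$ among themselves and the $\varepsilon_i$ among themselves, I would normalize the image of $\iota$ to the standard shape
\[
\{\varepsilon_i\pm\varepsilon_j,\ \delta_k\pm\delta_l,\ 2\delta_k,\ \delta_k\pm\varepsilon_i : 1\leq i\neq j\leq m,\ 1\leq k\neq l\leq r\},
\]
so that the component occupies all of $\varepsilon_1,\dots,\varepsilon_m$ together with the coordinates $\delta_1,\dots,\delta_r$.

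Next I would restrict the whole splinter to a metrically embedded copy of $D(m,n-1)$. Crucially, I choose this copy to delete a $\delta$-coordinate that is actually used by the component, say $\delta_r$, which is legitimate because a permutation in $W(C_n)$ identifies the deletion of $\delta_r$ with the deletion of $\delta_n$, i.e. with the standard embedding $D(m,n-1)\hookrightarrow D(m,n)$. Writing $\Delta_1'=\Delta_1\cap D(m,n-1)$ and $\Delta_2'=\Delta_2\cap D(m,n-1)$, I would verify that $(\Delta_1',\Delta_2')$ is again a splinter: the two images stay disjoint because they were disjoint in $D(m,n)$, the embedding and addition-law conditions of Section II are inherited by intersection, and the rank bound survives since $D(m,n-1)$ has rank one less than $D(m,n)$. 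The hypotheses $n\geq 3$ and $r\geq 2$ are exactly what guarantee that $D(m,n-1)$ and the reduced component $D(m,r-1)$ are genuine members of the $D$-family (i.e. $n-1\geq 2$ and $r-1\geq 1$).

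Finally I would check that deleting $\delta_r$ turns the component into $D(m,r-1)$: removing every root that contains $\delta_r$ from the normalized image leaves precisely the standard copy of $D(m,r-1)$ on the coordinates $\varepsilon_1,\dots,\varepsilon_m$ and $\delta_1,\dots,\delta_{r-1}$, and since both $\iota$ and $D(m,n-1)\hookrightarrow D(m,n)$ are metric, the restricted map is still a metric embedding of $D(m,r-1)$ into $\Delta_1'$. Hence $D(m,r-1)$ is a component of the splinter $(\Delta_1',\Delta_2')$ of $D(m,n-1)$, as required.

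I expect the main obstacle to be the verification that intersecting with the subsystem genuinely preserves the splinter structure, in particular that no root of $\Delta_2$ leaks into the reduced first component and that every component (not merely $D(m,r)$) restricts metrically, so that the embedding conditions continue to hold. Because $D(m,n)$ carries the roots $2\delta_k$ together with the odd roots $\delta_k\pm\varepsilon_i$, the careful bookkeeping concentrates on how these particular roots behave under deletion of $\delta_r$; this is the step where a short case analysis is unavoidable.
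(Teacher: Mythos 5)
Your proposal is correct and takes essentially the same route as the paper: use the even Weyl group to normalize the component $D(m,r)$ to a standard coordinate block, then restrict the whole splinter to a metrically embedded copy of $D(m,n-1)$ and observe that the component drops to $D(m,r-1)$ while disjointness, the addition law, and the rank bound are inherited. If anything, your bookkeeping is sharper than the paper's: you insist on deleting a $\delta$-coordinate that the component actually uses (justified by a $W(C_n)$ permutation), whereas the paper's written proof places the component on $\delta_{t+1},\dots,\delta_n$ and then deletes $\delta_1$, which lies inside the component only when $r=n$.
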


\begin{proof}
Suppose $(\Delta_{1},\Delta_{2})$ is a splint of $D(m,n)$ having $\iota:D(m,r)\hookrightarrow \Delta_{1}$ as a component. Without loss of generality, one can assume that the roots in the image of $\iota$ have the form $\{\varepsilon_{i}\pm\varepsilon_{j},\delta_{k}\pm\delta_{l},2\delta_{k},\delta_{k}\pm\varepsilon_{i}\}$ where $1\leqslant i\neq j\leqslant m , t+1\leqslant k \neq l\leqslant n$ ,when $ r=n-t$ for some $t\in \mathbb{Z}$. 
    Consider restricting the splint to the embedding $\iota_{1}:D(m,n-1)\hookrightarrow D(m,n)$, where the image of $\iota_{1}$ consists of roots of the form $\{\varepsilon_{i}\pm\varepsilon_{j},\delta_{k}\pm\delta_{l},2\delta_{k},\delta_{k}\pm\varepsilon_{i}\}$ where $1\leqslant i\neq j\leqslant m ,2\leqslant k\neq l\leqslant n$. Since $\iota_{1}$ is metric and all components of $\Delta_{1}$ and $\Delta_{2}$ are embedded metrically, this yields a splint of $D(m,n-1)$ having $D(m,r-1)$ as a component.
\end{proof}
\begin{proposition}
For $m\geqslant 4$, $n\geqslant 1$ or $m\geqslant2$, $n>3$ and either $n-2\leqslant m$, $m\geqslant n$ or $m-2\leqslant n , n \geqslant m$. If  $(\Delta_{1},\Delta_{2})$ is a splinter of $D(m,n)$ having $D(r,s)$ as a component, then $r,s\in\{1,n-1,m-1\}$.
\end{proposition}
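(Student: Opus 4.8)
The plan is to follow the template of the analogous proposition for $A(m-1,n-1)$: combine the preceding reduction lemma with the explicit list of splints of small $D(m,n)$, and argue that an \emph{intermediate} component $D(r,s)$ is impossible. Phrased contrapositively, if some index of a $D$-component were strictly between $2$ and the top value, repeated descent would produce a splint of a small $D(m',n')$ still carrying an intermediate $D$-component, contradicting the tabulated splints of that base case; the surviving possibilities are then exactly an index equal to $1$ or to $m-1$, $n-1$.

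First I would fix a splint $(\Delta_1,\Delta_2)$ with a component $\iota:D(r,s)\hookrightarrow\Delta_1$. Because equivalence is taken up to $\sigma\in W$ acting on even roots, and $W$ is here the group of signed permutations of the $\varepsilon_i$ and of the $\delta_k$ coming from $D_m\oplus C_n$, I may normalize the image to occupy $\varepsilon_1,\dots,\varepsilon_r$ and $\delta_1,\dots,\delta_s$, so that it consists precisely of the roots $\varepsilon_i\pm\varepsilon_j,\ \delta_k\pm\delta_l,\ 2\delta_k,\ \delta_k\pm\varepsilon_i$ with $i,j\le r$, $k,l\le s$. Since $D(m,n)$ is simply laced, the earlier simply-laced lemma makes every relevant embedding metric, which is the standing hypothesis consumed by the reduction lemma. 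I would then iterate that lemma in the $\delta$-direction, stripping one $\delta$ from the ambient algebra and from the component at each step as long as the $s$-index exceeds $1$, and do the same in the $\varepsilon$-direction for the $r$-index; the size hypotheses (which force $m$ and $n$ to stay comparable) are exactly what keep both ambient indices in the range where each step applies and the two images stay disjoint. Each step preserves the property of being a splint of some $D(m',n')$ and lowers the relevant pair of indices by one, so the process terminates at a case listed in the table for $D(m,n)$, whose components I read off and lift back, each lift shifting an index by one while preserving metricity.

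The hardest part is twofold. The reduction lemma as stated only strips the $\delta$-coordinate of a component of the saturated form $D(m,r)$, so to handle a genuine $D(r,s)$ with $r<m$ I must first prove the symmetric one-step reduction in the $\varepsilon$-coordinate (or, better, a combined reduction $D(r,s)\hookrightarrow D(m,n)\Rightarrow D(r-1,s-1)\hookrightarrow D(m-1,n-1)$ in the spirit of the $A$-case lemma), and then check that the $\varepsilon$- and $\delta$-descents can be interleaved without breaking the hypotheses $m\ge2$, $n\ge3$, the disjointness of the images, or the metricity that each consumes. The second difficulty is that descent alone does not contradict the small intermediate values: an index equal to $2$ descends to $1$ rather than to an impossible configuration, so the genuinely small cases (such as $s=2$) must be excluded by direct inspection of the base-case table, and I must also verify that this finite table is exhaustive, so that no $D(r,s)$ with intermediate indices slips through in low rank.
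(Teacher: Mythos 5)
Your proposal takes essentially the same route as the paper: the paper's entire proof of this proposition is the one-line remark that ``one may argue by contradiction using previous lemma and table of $D(m,n)$,'' i.e.\ exactly the descent-via-reduction-lemma-plus-base-case-table strategy you describe. Your write-up is in fact more careful than the paper's, since you correctly flag that the stated reduction lemma only strips the $\delta$-index of a component of the saturated form $D(m,r)$, so the symmetric $\varepsilon$-direction reduction (and the low-rank inspection of the table) must be supplied separately --- a gap the paper passes over in silence.
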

\begin{proof}
One may argue by contradiction using previous lemma and table of $D(m,n)$.
\end{proof}
\begin{lemma}
$A(m-1,n-1)$ is not a component of $D(m,n)$ for either $m\geqslant 4,~n\geqslant 1$ or $m\geqslant 2 ,~n>3$ and not a component of $C(n+1)$ for $n\geq4$.
\end{lemma}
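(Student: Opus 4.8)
The plan is to prove both assertions by contradiction, exploiting the fact that the embedding of an $A$-type component is forced to be metric, which pins down its image so rigidly that the leftover roots cannot be reassembled into a second admissible summand of bounded rank.

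\textbf{The $D(m,n)$ case.} Suppose $A(m-1,n-1)$ were a component of a splint $(\Delta_1,\Delta_2)$ of $D(m,n)$, say $\iota\colon A(m-1,n-1)\hookrightarrow\Delta_1$. By the earlier lemma that every embedding of a Lie superalgebra root system into $D(m,n)$ is metric, $\iota$ is metric; so after acting by the Weyl group of the even part $D_m\oplus C_n$ I may assume the image of $\iota$ is the standard copy $\{\varepsilon_i-\varepsilon_j,\ \delta_k-\delta_l,\ \delta_k-\varepsilon_i\}$, occupying all $m$ of the $\varepsilon$'s and all $n$ of the $\delta$'s. The complementary roots are
\[ R=\{\varepsilon_i+\varepsilon_j\}\cup\{\delta_k+\delta_l\}\cup\{2\delta_k\}\cup\{\delta_k+\varepsilon_i\}. \]
Since $A(m-1,n-1)$ already has rank $m+n-1$, the rank bound leaves the other components of $\Delta_1$ a total rank at most one, so up to a single root all of $R$ must be the image of $\Delta_2$.

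The crucial structural point I would verify next is that $R$ is \emph{additively isolated}: a short check of the finitely many sum types shows that no positive root of $R$ is a sum of two positive roots of $R$ (every such sum either mixes $\varepsilon$ and $\delta$ signs incompatibly or produces a vector involving four basis indices, none of which lies in $R$); in particular no even root of $R$ connects two odd roots of $R$. Hence no component of $\Delta_2$ can carry an addition relation, which forces each component to be an $A_1$ (for an even root of $R$) or an $A(0,0)$ (for an odd root of $R$, whose image has no even neighbour, exactly the situation excluded for superalgebras of rank $\ge 2$). Each of the $\binom{m}{2}+\binom{n}{2}+n$ even roots of $R$ therefore demands its own $A_1$ factor, so $\mathrm{rank}(\Delta_2)\ge\binom{m}{2}+\binom{n}{2}+n$. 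An elementary estimate gives $\binom{m}{2}+\binom{n}{2}+n>m+n$ throughout the ranges $m\ge4,\,n\ge1$ and $m\ge2,\,n>3$, contradicting $\mathrm{rank}(\Delta_2)\le\mathrm{rank}\,D(m,n)=m+n$.

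\textbf{The $C(n+1)$ case.} Here I first note that the even part of any genuine $A$-type component must land in the even part $C_n$ of $C(n+1)$, which has rank $n$; since $A_{m-1}\oplus A_{n-1}$ has rank $m+n-2$, this forces $m=2$, i.e. the component can only be $A(1,n-1)$. I then use the two ``columns'' of odd roots $\delta_k-\varepsilon_1$ and $\delta_k-\varepsilon_2$ together with the relation $\delta_k-\varepsilon_2=(\delta_k-\varepsilon_1)+(\varepsilon_1-\varepsilon_2)$: their images differ by the single even root $\mu=\iota(\varepsilon_1-\varepsilon_2)$ independently of $k$. Because $C(n+1)$ has only one $\varepsilon$, the images $\iota(\delta_k-\varepsilon_1)$ all carry the same $\varepsilon$-sign (their pairwise differences are $\varepsilon$-free even roots), so each has the form $\varepsilon+c_k$ with $c_k\in\{\pm\delta_j\}$; the requirement that $c_k+\mu\in\{\pm\delta_j\}$ for a \emph{fixed} even root $\mu$ admits at most two values of $c_k$. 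This bounds the number of distinct such odd roots by two, contradicting the $n\ (\ge 4)$ distinct roots that the column requires.

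The hard part will be the two places where I have asserted a normal form: justifying that a metric embedding of a copy of $A(m-1,n-1)$ is Weyl-conjugate to the standard copy (so that $R$ is exactly as written), and the parallel $\varepsilon$-sign bookkeeping in the $C(n+1)$ argument; the additive-isolation check and the rank estimate are then routine. I expect the conjugacy normalization, rather than the counting, to be the genuine obstacle.
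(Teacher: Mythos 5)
Your $D(m,n)$ argument is essentially the paper's own proof, carried out with more care. The paper likewise reduces to the standard copy of $A(m-1,n-1)$, asserts (without justification) that every other component of $\Delta_1$ and of $\Delta_2$ must be $A_1$ or $A(0,0)$, and concludes $\mathrm{rank}(\Delta_2)>m+n$, a contradiction with the splint rank condition. Your additive-isolation check on the complement $R$ is precisely the justification the paper omits, and your counting works; note only that one even root of $R$ could be absorbed as the single extra component of $\Delta_1$, so the honest bound is $\binom{m}{2}+\binom{n}{2}+n-1$, which still exceeds $m+n$ in the stated ranges. The Weyl-conjugacy normalization you flag as the hard step is assumed just as silently in the paper (and already in its Section III lemmas), so you are no worse off there.

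The $C(n+1)$ half, however, has a genuine gap. Your even-rank constraint gives $m\le 2$, not $m=2$, and you silently discard $m=1$, i.e. the component $A(0,n-1)$. That case cannot be waved away: unlike $A(1,n-1)$, the root system $A(0,n-1)$ \emph{does} embed in $C(n+1)$, for instance via $\delta_k-\delta_l\mapsto\delta_k-\delta_l$ and $\delta_k-\varepsilon_1\mapsto\varepsilon+\delta_k$, which preserves all addition relations and the grading. So your column/sign obstruction --- which is correct, and in fact proves the stronger statement that $A(1,n-1)$ does not embed in $C(n+1)$ at all for $n\ge 3$ --- is simply unavailable for $A(0,n-1)$; this is the case that makes the lemma non-vacuous for the classification of $C(n+1)$ splints, and it must be excluded the way the paper's ``similar argument'' does: the complement of the $A(0,n-1)$ image, namely $\{\delta_k+\delta_l,\,2\delta_k,\,\varepsilon-\delta_k\}$, is additively isolated, so all remaining components are $A_1$ or $A(0,0)$, forcing $\mathrm{rank}(\Delta_2)\ge\binom{n}{2}+2n-1>n+1$ for $n\ge 4$. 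A secondary issue in the same step: your inequality $m+n-2\le n$ presumes an embedding cannot lower rank, which is not automatic here, since embeddings are only addition-preserving injections of root sets and need not extend to linear maps; the counting route sidesteps that as well.
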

\begin{proof}
Suppose $(\Delta_{1},\Delta_{2})$ is a splint of $D(m,n)$ and $A(m-1,n-1)\hookrightarrow \Delta_{1}$. Then other components of $\Delta_{1}$ are isomorphic to $A_{1}$ or $A(0,0)$. Also all components of $\Delta_{2}$ are  isomorphic to $A_{1}$ or $A(0,0)$. Then rank of $\Delta_{2}$ is greater than $m+n$, which is a contradiction. Similar argument for $C(n+1)$ .
\end{proof}
\begin{lemma}
$B(m,n)$and $B(0,n)$ are not embedded in $D(m,n)$.
\end{lemma}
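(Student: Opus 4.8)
The plan is to isolate a structural feature of the root systems of $B(m,n)$ and $B(0,n)$ that $D(m,n)$ lacks and that any embedding must preserve. The decisive feature is the presence of a short odd root: both $B(m,n)$ and $B(0,n)$ contain an odd root $\delta_k$ whose double $2\delta_k$ is again a root (an even one), whereas the odd roots of $D(m,n)$ are only the $\delta_k\pm\varepsilon_i$, none of which doubles to a root. This is exactly the invariant already exploited earlier in the paper to exclude $B(0,n)\hookrightarrow C(n+1)$.

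First I would assume an embedding $\iota:B(0,n)\hookrightarrow D(m,n)$ and use additivity. Since $2\delta_n=\delta_n+\delta_n$ with both $\delta_n$ and $2\delta_n$ roots of $B(0,n)$, the defining property of an embedding forces $\iota(2\delta_n)=\iota(\delta_n)+\iota(\delta_n)=2\iota(\delta_n)$. Because $\iota$ respects the grading, $\iota(\delta_n)$ is an odd root of $D(m,n)$, hence of the form $\delta_k\pm\varepsilon_i$ for some $k,i$, so that $2\iota(\delta_n)=2\delta_k\pm2\varepsilon_i$.

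Next I would compare this against the requirement that $\iota(2\delta_n)$ be an even root of $D(m,n)$, that is, one of $\varepsilon_i\pm\varepsilon_j$, $\delta_k\pm\delta_l$, or $2\delta_k$. Since $2\delta_k\pm2\varepsilon_i$ is none of these, the two evaluations of $\iota(2\delta_n)$ are incompatible, which is the desired contradiction. The identical computation applied to the odd root $\delta_n$ of $B(m,n)$ (whose double $2\delta_n$ is again an even root of $B(m,n)$) rules out $B(m,n)\hookrightarrow D(m,n)$ as well.

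The only step that requires care is confirming that $D(m,n)$ genuinely possesses no odd root whose double is a root, and this is immediate from the explicit root list given at the start of this section. I do not anticipate a serious obstacle, since the property ``an odd root that doubles to a root'' is manifestly stable under any grading-preserving additive map, so the whole argument reduces to the short case-check above.
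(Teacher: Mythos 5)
Your proposal is correct and takes essentially the same route as the paper: the paper's proof is precisely the one-line observation that $B(m,n)$ (and $B(0,n)$) has an odd root $\beta$ with $2\beta$ an even root, while $D(m,n)$ has no such root. Your write-up simply makes the paper's argument explicit by identifying $\beta=\delta_n$, invoking additivity and grading-preservation of the embedding, and checking that doubling any odd root $\delta_k\pm\varepsilon_i$ of $D(m,n)$ lands outside its root system.
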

\begin{proof}
As $B(m,n)$ has a $\beta$ odd root such that $2\beta$ is an even root, but $D(m,n)$ has no such root.
\end{proof}
\begin{lemma}
$C(n+1)$ is not embedded in $D(m,n)$.
\end{lemma}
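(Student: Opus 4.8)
The plan is to assume an embedding $\iota\colon C(n+1)\hookrightarrow D(m,n)$ exists and to derive a contradiction from the single long even root $2\delta_k$ together with the two odd roots that share the index $k$. Recall that the odd roots of $D(m,n)$ are exactly the $\delta_p\pm\varepsilon_i$, while its even roots are $\varepsilon_i\pm\varepsilon_j$ (norm $-2$), $\delta_p\pm\delta_q$ (norm $2$) and $2\delta_p$ (norm $4$). Since an embedding respects the grading, $\iota$ sends the odd roots $\varepsilon+\delta_k$ and $\varepsilon-\delta_k$ to odd roots $a_k,b_k\in\{\delta_p\pm\varepsilon_i\}$ of $D(m,n)$ and sends $2\delta_k$ to an even root. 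The relation I would exploit is
\[
(\varepsilon-\delta_k)+2\delta_k=\varepsilon+\delta_k ,
\]
a genuine sum of positive roots in $C(n+1)$ (odd $+$ even $=$ odd) that the embedding must preserve, so that $\iota(2\delta_k)=a_k-b_k$.

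Next I would pin down the length of $\iota(2\delta_k)$. By the earlier lemma every embedding into $D(m,n)$ is metric, so there is a nonzero integer $\lambda$ with $(\alpha,\beta)_0=\lambda(\iota\alpha,\iota\beta)_1$. In $C(n+1)$ the short even roots $\delta_k-\delta_l$ have norm $-2$ and the long even roots $2\delta_k$ have norm $-4$, a ratio $2:1$. Matching this ratio against the available even norms $\{-2,2,4\}$ of $D(m,n)$ forces short roots to norm-$2$ images and long roots to norm-$4$ images, with $\lambda=-1$; the only alternative would need a norm-$(-4)$ even root, which $D(m,n)$ lacks. Since $2\delta_p$ is the unique even-root type of norm $4$, we conclude $\iota(2\delta_k)=2\delta_{p}$ for some index $p$.

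The contradiction then comes from comparing the two descriptions of $\iota(2\delta_k)$. On one hand it equals the difference $a_k-b_k$ of two odd roots $\delta_{p_1}\pm\varepsilon_{i_1}$ and $\delta_{p_2}\pm\varepsilon_{i_2}$. A short exhaustive check over the sign choices shows such a difference is a root of $D(m,n)$ only when the $\delta$-parts coincide (yielding a $D_m$ root $\pm\varepsilon_{i_1}\pm\varepsilon_{i_2}$ of norm $-2$) or the $\varepsilon$-parts cancel (yielding a short root $\delta_{p_1}-\delta_{p_2}$ of norm $2$); it can never equal a long root $2\delta_p$ without forcing $a_k=b_k$, which injectivity forbids. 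This is incompatible with $\iota(2\delta_k)=2\delta_p$ from the previous step, so no embedding exists. A quicker but less self-contained route is simply to invoke the earlier lemma directly: any $\Delta$ that embeds into $D(m,n)$ must be of type $A$ or $D$, whereas $C(n+1)$ is of type $C$.

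I anticipate the main obstacle to be the normalization step, namely justifying rigorously that $\iota(2\delta_k)$ is forced to be long; this is exactly where metricity (hence the earlier lemma) is essential, and where the genuinely type-$C$ hypothesis $n\ge 2$ enters, since for $n=1$ there is no short even root to fix the length ratio and indeed $C(2)\cong A(1,0)$ does embed. Some care is also needed to confirm that the case analysis of the difference $a_k-b_k$ is genuinely exhaustive over all four sign configurations.
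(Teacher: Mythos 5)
Your proof is correct (for $n\ge 2$) but follows a genuinely different route from the paper's. The paper disposes of this lemma in one line by composition: the root system of $D(m,n)$ sits inside that of $B(m,n)$ via the identity map, so an embedding $C(n+1)\hookrightarrow D(m,n)$ would yield $C(n+1)\hookrightarrow B(m,n)$, contradicting the earlier lemma of Section IV (which is stated only for $m>3$, $n\ge 2$). You instead argue internally in $D(m,n)$: the relation $(\varepsilon-\delta_k)+2\delta_k=\varepsilon+\delta_k$ forces $\iota(2\delta_k)$ to be a difference of two odd roots, metricity forces $\iota(2\delta_k)$ to be a long root $2\delta_p$, and a sign check shows a difference of odd roots $\delta_{p_1}\pm\varepsilon_{i_1}$, $\delta_{p_2}\pm\varepsilon_{i_2}$ is never long. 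Each approach buys something: the paper's is shorter, but it silently inherits the restriction $m>3$ from the $B(m,n)$ lemma, so it does not actually cover $m\in\{2,3\}$; yours works for every $m$ once $n\ge 2$, and it correctly isolates the hypothesis the paper omits --- the statement is false at $n=1$, since $C(2)\cong A(1,0)$ does embed in $D(m,1)$ (the paper's own splint table for $D(2,1)$ exhibits exactly this embedding). One caveat on self-containedness: your metricity step leans on the earlier lemma asserting that any system embedding into $D(m,n)$ is of type $A$ or $D$ and embeds metrically; as you acknowledge, that lemma already kills $C(n+1)$ in one line, so your long-root analysis is best viewed as an independent, more explicit verification rather than a logically leaner path --- both your route and the paper's ultimately rest on earlier lemmas whose own proofs in the paper are only sketched.
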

\begin{proof}
As $D(m,n)$ is embedded in $B(m,n)$ and $C(n+1)$ is not embedded in $B(m,n)$. 
\end{proof}

We can describe all the splinters of $D(m,n)$ in the following way,
\begin{enumerate}
\item The splinter ($A(1,0),2A_{1}+2A(0,0)$) of  $D(2,1)$ is given by
 \begin{align*}
 \Delta_{1}&=\{\varepsilon_{1}-\varepsilon_{2},\delta_{1}-\varepsilon_{1},\delta_{1}-\varepsilon_{2}\},\\
\Delta_{2}&=\{2\delta_{1},\varepsilon_{1}+\varepsilon_{2}\}\cup\{\delta_{1}+\varepsilon_{1},\delta_{1}+\varepsilon_{2}\}.
 \end{align*}
\item The splinter ($A(1,1),4A_{1}+A(0,0)$) of $D(2,2)$ is given by
 \begin{align*}
 \Delta_{1}&=\{\varepsilon_{1}-\varepsilon_{2},\delta_{1}-\delta_{2},\delta_{1}-\varepsilon_{1},\delta_{1}-\varepsilon_{2},\delta_{2}-\varepsilon_{1},\delta_{2}-\varepsilon_{2}\},\\
\Delta_{2}&=\{\varepsilon_{1}+\varepsilon_{2},\delta_{1}+\delta_{2},2\delta_{1},2\delta_{2}\}\cup\{\delta_{1}+\varepsilon_{1},\delta_{1}+\varepsilon_{2},\delta_{2}+\varepsilon_{1},\delta_{2}+\varepsilon_{2}\}.
 \end{align*}
\item The splinter ($A(2,0),4A_{1}+3A(0,0)$) of $D(3,1)$ is given by
 \begin{align*}
 \Delta_{1}&=\{\varepsilon_{1}-\varepsilon_{2},\varepsilon_{1}-\varepsilon_{3},\varepsilon_{2}-\varepsilon_{3},\delta_{1}-\varepsilon_{1},\delta_{1}-\varepsilon_{2},\delta_{1}-\varepsilon_{3}\},\\
\Delta_{2}&=\{\varepsilon_{1}+\varepsilon_{2},\varepsilon_{1}+\varepsilon_{3},\varepsilon_{2}+\varepsilon_{3},2\delta_{1}\}\cup\{\delta_{1}+\varepsilon_{1},\delta_{1}+\varepsilon_{2},\delta_{1}+\varepsilon_{3}\}.
 \end{align*}
\item The splinter($A(2,1)+A_{1},5A_{1}+6A(0,0)$)of $D(3,2)$ is given by 
 \begin{align*}
 \Delta_{1}&=\{\varepsilon_{1}-\varepsilon_{2},\varepsilon_{1}-\varepsilon_{3},\varepsilon_{2}-\varepsilon_{3},\delta_{1}-\delta_{2},\delta_{1}-\varepsilon_{1},\delta_{1}-\varepsilon_{2},\delta_{1}-\varepsilon_{3},\delta_{2}-\varepsilon_{1},\delta_{2}-\varepsilon_{2},\delta_{2}-\varepsilon_{3}\}\cup\{2\delta_{2}\},\\
\Delta_{2}&=\{\varepsilon_{1}+\varepsilon_{2},\varepsilon_{1}+\varepsilon_{3},\varepsilon_{2}+\varepsilon_{3},\delta_{1}+\delta_{2},2\delta_{1}\}\cup\{\delta_{1}+\varepsilon_{1},\delta_{1}+\varepsilon_{2},\delta_{1}+\varepsilon_{3},\delta_{2}+\varepsilon_{1},\delta_{2}+\varepsilon_{2},\delta_{2}+\varepsilon_{3}\}.
 \end{align*}
\item The splinter($A(1,2)+2A_{1},5A_{1}+6A(0,0)$)of $D(2,3)$ is given by 
 \begin{align*}
 \Delta_{1}&=\{\varepsilon_{1}-\varepsilon_{2},\delta_{1}-\delta_{2},\delta_{1}-\delta_{3},\delta_{2}-\delta_{3},\delta_{1}-\varepsilon_{1},\delta_{2}-\varepsilon_{1},\delta_{3}-\varepsilon_{1},\delta_{1}-\varepsilon_{2},\delta_{2}-\varepsilon_{2},\delta_{3}-\varepsilon_{2}\}\cup\{2\delta_{1},2\delta_{2}\},\\
\Delta_{1}&=\{\varepsilon_{1}+\varepsilon_{2},\delta_{1}+\delta_{2},\delta_{1}+\delta_{3},\delta_{2}+\delta_{3},\delta_{1}+\varepsilon_{1},\delta_{2}+\varepsilon_{1},\delta_{3}+\varepsilon_{1},\delta_{1}+\varepsilon_{2},\delta_{2}+\varepsilon_{2},\delta_{3}+\varepsilon_{2}\}\cup\{2\delta_{3}\}.
 \end{align*}
\item For either $n-2\leqslant m$ or $m\geqslant n$ the splinter ($D(m,n-1)+A_{1}, (2n-2)A_{1}+2mA(0,0)$) of $D(m,n)$ is given by
 \begin{align*}
 \Delta_{1}&=\{\varepsilon_{i}\pm\varepsilon_{j},\delta_{k}\pm\delta_{l},2\delta_{k},\delta_{k}\pm\varepsilon_{i}: 1\leq i\neq j \leq m , 2\leq k \neq l \leq n \}\cup\{2\delta_{1}\},\\
\Delta_{2}&=\{\delta_{1}\pm\delta_{l},\delta_{1}\pm\varepsilon_{i}: 1\leq i\leq m ,2\leq l \leq n \}.
 \end{align*}
Similarly, for either $m-2\leqslant n$ or $ n \geqslant m$ the splinter ($D(m-1,n), (2m-2)A_{1}+2nA(0,0)$) of $D(m,n)$  is given by
 \begin{align*}
 \Delta_{1}&=\{\varepsilon_{i}\pm\varepsilon_{j},\delta_{k}\pm\delta_{l},2\delta_{k},\delta_{k}\pm\varepsilon_{i}: 2\leq i\neq j \leq m , 1\leq k \neq l \leq n \},\\
\Delta_{2}&=\{\varepsilon_{1}\pm\varepsilon_{i},\delta_{1}\pm\varepsilon_{k}: 2\leq i\leq m ,1\leq l \leq n \}.
 \end{align*}
\item The splinter ($D_{m}+C{n},2mnA(0,0)$) of $D(m,n)$ is given by
 \begin{align*}
 \Delta_{1}&=\{\varepsilon_{i}\pm\varepsilon_{j},\delta_{k}\pm\delta_{l},2\delta_{k}: 1\leq i\neq j \leq m , 1\leq k \neq l \leq n \},\\
\Delta_{2}&=\{\delta_{i}\pm\varepsilon_{k}: 1\leq i\leq m ,1\leq k \leq n \}.
 \end{align*}
\end{enumerate}

\begin {table}[h]
\caption {$D(m,n)$} \label{tab:title} 
\begin{center}

\begin{tabular}{ | b{3cm} | m{6cm}| m{6cm} | } 
 \hline 
  $\Delta$     & $\Delta_{1}$ & $\Delta_{2}$ \\ \cline{1-3} 
  $D(2,1)$ & $A(1,0)$ & $2A_{1}+2A(0,0)$ \\ 
              
\hline   
  $D(2,2)$ & $A(1,1)$ & $4A_{1}+A(0,0)$ \\

    \hline
\end{tabular}
\end{center}
\end {table} 
     
     \begin {table}[h]
\begin{center}

\begin{tabular}{ | b{3cm} | m{6cm}| m{6cm} | } 
  
  \hline
     $D(3,1)$ & $A(2,0)$ & $4A_{1}+3A(0,0)$ \\                   
\hline         
$D(2,3)$ & $A(1,2)2+A_{1}$ & $5A_{1}+6A(0,0)$ \\ 
               
\hline$D(3,2)$ & $A(2,1)+A_{1}$ & $5A_{1}+6A(0,0)$ \\ 
              
\hline 
$D(m,n)$ & $D_{m}+C{n}$ & $2mnA(0,0)$ \\
\hline
$D(m,n)$ for either $n-2\leqslant m$ or $m\geqslant n$ & $D(m,n-1)+A_{1}$ & $(2n-2)A_{1}+2mA(0,0)$\\
\hline
$D(m,n)$ for either $m-2\leqslant n$ or $ n \geqslant m$ & $D(m-1,n)$ & $(2m-2)A_{1}+2nA(0,0)$\\
\hline
\end{tabular}
\end{center}
\end {table}
\section{Splints of Lie superalgebras $G(3),F(4),D(2,1;\alpha)$}
\begin{enumerate}
\item The positive root system $G(3)$ is given by 
\[\Delta=\{2\delta,\varepsilon_{i},\varepsilon_{i}-\varepsilon_{j},\delta,\varepsilon_{i}\pm\delta: 1\leq i\neq j \leq 3~and~ \varepsilon_{1}+\varepsilon_{2}+\varepsilon_{3}=0 \},\]
with the normalization
\[(\varepsilon_{i},\varepsilon_{j})=-3\delta_{ij}+1,~~(\delta,\delta)=2,~~(\varepsilon_{i},\delta)=0 ~~~~~ for ~~1\leq i,j \leq 3 . \]
Where the distinguished simple root system is given by;
\[\alpha_{1}=\delta+\varepsilon_{3},\alpha_{2}=\varepsilon_{1},\alpha_{3}=\varepsilon_{2}-\varepsilon_{1}.\]

The root system $G(3)$ has two splints and the splints are $(A_{2}+3A(0,0),A_{2}+A_{1}+4A(0,0))$ and $(B_{2}+A_{1}+3A(0,0),2A_{1}+4A(0,0)),$ these are given by 
 \begin{align*}
 \Delta_{1}&= \{\alpha_{2},\alpha_{2}+\alpha_{3},2\alpha_{2}+\alpha_{3}\} \cup \{\alpha_{1}+\alpha_{2},\alpha_{1}+\alpha_{2}+\alpha_{3},\alpha_{1}+3\alpha_{2}+\alpha_{3}\}, \\ 
\Delta_{2}&= \{\alpha_{3},3\alpha_{2}+\alpha_{3},3\alpha_{2}+2\alpha_{3}\} \cup \{\alpha_{1},\alpha_{1}+2\alpha_{2}+\alpha_{3},\alpha_{1}+3\alpha_{2}+2\alpha_{3},\alpha_{1}+4\alpha_{2}+2\alpha_{3}\} \cup \{2\alpha_{1}+4\alpha_{2}+2\alpha_{3}\}. 
 \end{align*}
And another one is
 \begin{align*}
 \Delta_{1}&= \{\alpha_{2},\alpha_{3},\alpha_{2}+\alpha_{3},2\alpha_{2}+\alpha_{3}\} \cup \{2\alpha_{1}+4\alpha_{2}+2\alpha_{3}\} \cup  \{\alpha_{1},\alpha_{1}+\alpha_{2},\alpha_{1}+\alpha_{2}+\alpha_{3}\},\\
\Delta_{2}&= \{3\alpha_{2}+\alpha_{3},3\alpha_{2}+2\alpha_{3}\} \cup \{\alpha_{1}+2\alpha_{2}+\alpha_{3},\alpha_{1}+3\alpha_{2}+2\alpha_{3},\alpha_{1}+4\alpha_{2}+2\alpha_{3},\alpha_{1}+3\alpha_{2}+\alpha_{3}\}.
 \end{align*}
\item The positive root system of $F(4)$ is given by 
\[\Delta=\{\delta,\varepsilon_{i}\pm\varepsilon_{j},\varepsilon_{i},
\frac{1}{2}(\varepsilon_{1}\pm\varepsilon_{2}\pm\varepsilon_{3}\pm\delta): 1\leq i\neq j \leq 3\}\]
with the normalization
\[(\varepsilon_{i},\varepsilon_{j})=-2\delta_{ij},~~(\delta,\delta)=6,~~(\varepsilon_{i},\delta)=0 ~~~~~ for ~~1\leq i,j \leq 3  .\]
Where the distinguished simple root system is given by;
\[\alpha_{1}=\frac{1}{2}(\delta-\varepsilon_{1}-\varepsilon_{2}-\varepsilon_{3}),\alpha_{2}=\varepsilon_{3},\alpha_{3}=\varepsilon_{2}-\varepsilon_{3},\alpha_{4}=\varepsilon_{1}-\varepsilon_{2}.\]
 In the root system of $F(4)$ we can observe that the root system of $C(4)$,$B(0,3)$,$B(0,2)$ are not embedded in $F(4)$. Only the root system of $D(2,1)$ is embedded in $F(4)$ which is identified as \[\{\alpha_{2}+\alpha_{3},2\alpha_{2}+\alpha_{3}+\alpha_{4},2\alpha_{1}+3\alpha_{2}+2\alpha_{3}+\alpha_{4}\}\cup\{\alpha_{1},\alpha_{1}+\alpha_{2}+\alpha_{3},\alpha_{1}+2\alpha_{2}+\alpha_{3}+\alpha_{4},\alpha_{1}+3\alpha_{2}+2\alpha_{3}+\alpha_{4}\}.\] So $A(2,1)$ is also embedded in $F(4)$. Hence the root system $F(4)$ has only one splint   \[(A_{1}+B_{3},8A(0,0)),\]
 where $\Delta_{1}$ and $\Delta_{2}$ are all even roots and odd roots respectively.
\item   The positive root system of $D(2,1;\alpha)$ is given by 
\[\Delta=\{2\varepsilon_{i},
(\varepsilon_{1}\pm\varepsilon_{2}\pm\varepsilon_{3}): 1\leq i \leq 3\},\]
with the normalization
\[(\varepsilon_{1},\varepsilon_{1})=-\dfrac{(1+\alpha)}{2},~~(\varepsilon_{2},\varepsilon_{2})=-\dfrac{1}{2},~~(\varepsilon_{3},\varepsilon_{3})=-\dfrac{\alpha}{2},~~(\varepsilon_{i},\varepsilon_{j})=0 ~~for~~ i\neq j  . \]
Where the distinguished simple root system is given by;
\[\alpha_{1}=\varepsilon_{2}-\varepsilon_{1},\alpha_{2}=2\varepsilon_{2},\alpha_{3}=2\varepsilon_{3}.\]
In a similar way the root system $D(2,1;\alpha)$ has two splinters that are $(A(1,0)+A_{1},A_{1}+2A(0,0))$ and $(3A_{1},4A(0,0))$ and these are given by  \begin{align*}
\Delta_{1}&=\{\alpha_{1},\alpha_{2},\alpha_{1}+\alpha_{2}\}\cup \{\alpha_{3}\},\\ \Delta_{2}&=\{2\alpha_{1}+\alpha_{2}+\alpha_{3}\}\cup \{\alpha_{1}+\alpha_{3},\alpha_{1}+\alpha_{2}+\alpha_{3}\} 
\end{align*} 
and  \begin{align*}
\Delta_{1}&=\{\alpha_{1},\alpha_{3},2\alpha_{1}+\alpha_{2}+\alpha_{3}\},\\
\Delta_{2}&=\{\alpha_{1},\alpha_{1}+\alpha_{2},\alpha_{1}+\alpha_{3},\alpha_{1}+\alpha_{2}+\alpha_{3}\}
\end{align*}
 respectively. 
 \end{enumerate}
 \section{Concluding remarks}
 In this paper we have determined splints of all classical Lie superalgebras up to equivalence with Weyl group of the corresponding algebra. We hope results of this paper can help us to some extent in determining the branching coefficient. We want to delve in to this aspect of research in future.
\section{Acknowledgement}
 One of the author prof. K.C.Pati thank National Board of Higher Mathematics(DAE), India for the Project Grant No. 2$\mid$48(25)$\mid$2016 R\&DII/4341 dt: 27.03.17 .

{} 
\end{document}